\newcommand{\tr}{\ast}
\newcommand{\aaa}{\mathbf{a}}
\newcommand{\xx}{\mathbf{x}}
\newcommand{\yy}{\mathbf{y}}
\newcommand{\bb}{\mathbf{b}}
\newcommand{\ee}{\mathbf{e}}
\newcommand{\vv}{\mathbf{v}}
\newcommand{\Span}{\mathrm{Span}}
\newcommand{\rank}{\mathrm{rank}}
\newcommand{\degree}{\mathrm{degree}}
\newcommand{\JF}{J_F}
\newcommand{\mux}{\mu}
\newcommand{\SJ}{\tilde{J}_{F_0}}
\newcommand{\SJF}{\tilde{J}_{F}}
\newtheorem{example}{EXAMPLE}[section]
\newtheorem{theorem}{Theorem}[section]
\newtheorem{lemma}[theorem]{Lemma}
\newtheorem{algorithm}[theorem]{Algorithm}
\newtheorem{remark}[theorem]{Remark}
\newtheorem{define}[theorem]{Definition}
\newtheorem{assumption}[theorem]{Assumption}
\newenvironment{proof}{{\bf{Proof. }}}{\hfill $\square$\\}
\journal{ }
\begin{document}

\begin{frontmatter}

\title{Verified Error Bounds for Isolated Singular Solutions of Polynomial Systems: Case of Breadth One\tnoteref{t1}}
\tnotetext[t1]{ This research is supported  by a NKBRPC 2011CB302400
and the Chinese National
 Natural Science Foundation under Grants 60821002/F02, 60911130369 and
 10871194.
Some results of this paper have been presented at the Symbolic and
Numerical Computation (SNC 2011) conference held June 7-9, 2011 in
San Jose, California.}

\author[klmm,inria]{Nan Li}
\ead{linan08@amss.ac.cn}
\author[klmm]{Lihong Zhi}
\ead{lzhi@mmrc.iss.ac.cn}
%\ead[url]{http://www.mmrc.iss.ac.cn/~lzhi/}

\address[klmm]{KLMM, Academy of Mathematics and System Science, CAS,
100190 Beijing, China}
\address[inria]{INRIA Paris-Rocquencourt, 78153 Le Chesnay Cedex, France}

\begin{abstract}
 In this
paper we describe how to improve the performance of the
symbolic-numeric method in \citep{LiZhi:2009,LZ:2011} for
computing the multiplicity structure and refining approximate
isolated singular solutions in the breadth one case. By introducing  a parameterized and
deflated system with smoothing parameters, we generalize the
algorithm in \citep{RuGr09}  to compute verified  error bounds
such that   a slightly perturbed polynomial system  is guaranteed
to have a breadth-one multiple root within the computed bounds.
\end{abstract}

\begin{keyword}
polynomial systems, isolated singular solutions, multiplicity structure, verification, error bounds
\end{keyword}

\end{frontmatter}

\section{Introduction}
\label{intro}

It is  a challenge problem to solve the polynomial systems with
singular solutions.  Rall~\citep{Rall66}  studied  the convergence
properties of Newton's method at singular solutions, and a lot of
modifications of Newton's method to restore quadratic convergence
have  been proposed
in~\citep{DeckerKelley:1980I,DeckerKelley:1980II,DeckerKelley:1982,
Griewank:1980,  GriewankOsborne:1981, Griewank85, OWM:1983, Ojika:1987,
Reddien:1978,Reddien:1980, YAMAMOTONORIO:1984,
Chen97,ShenYpma05}. Recently,
 many new symbolic-numeric methods have been proposed for
refining  an approximate singular solution to high
accuracy~\citep{Corless:1997,DZ:2005,DLZ:2009,GLSY:2005,
GLSY:2007, Lecerf:2002, LVZ:2006, LVZ07, LVZ:2008, WuZhi:2008,
WuZhi:2009, MM:2011}. Especially, in  \citep{RuGr09,MM:2011}, they
computed verified error bounds for singular solutions of nonlinear
systems.

In \citep{LZ:2011},  we present a symbolic-numeric method to
refine an approximate isolated singular solution of a polynomial
system when the Jacobian matrix of the system evaluated at the
singular solution  has corank one  approximately. Our approach is
based on the regularized Newton iteration and the computation of
differential conditions satisfied at the approximate singular
solution. The size of matrices involved in our algorithm is
bounded by the number of variables. The algorithm will converge
quadratically if the approximate singular solution  is close to
the isolated exact singular solution.  A preliminary
implementation performs well in most cases.  However, it  may
suffer from computing and storing dense multiplicity structures
caused by linear transformation or dense expressions of
differential functionals even for sparse input polynomials. In
\citep{Linan:2011}, we show briefly how to evaluate the
differential conditions more efficiently by avoiding the linear
transformation and solving a sequence of least squares problems.
The techniques  for constructing a parameterized deflation system
and evaluations of differential conditions   are  similar to those
introduced in \citep{LVZ:2006, LVZ07}.

\paragraph{\bf{Main contribution}}
In this paper, we still  focus on the special case where the Jacobian
matrix has corank one.   We describe how to preserve the sparse
structure of the input polynomial systems by avoiding the linear
transformation. We reduce the storage space  for computing  the
multiplicity structure by saving and evaluating differential
conditions instead of explicit construction of differential
functionals. Furthermore, we show that the parameterized deflated
system introduced in \citep{Linan:2011} for avoiding the
construction of the differential functionals repeatedly can be
used to  generalize the algorithm in \citep{RuGr09}  to compute
verified  error bounds, therefore,   a slightly perturbed
polynomial system  is guaranteed to have a breadth-one multiple
root within the computed bounds. We prove that it is always
possible to construct a regular augmented system to compute an
inclusion of the singular root by choosing properly smooth
parameters and renumbering the polynomials. We provide numerical
experiments to demonstrate the effectiveness of our method.

\paragraph{\bf{Structure of the paper}}
Section \ref{pre} is devoted to recall some notations and
well-known facts. In Section \ref{dual}, we describe a new
algorithm for computing the multiplicity structure of the singular
solution when the Jacobian matrix has corank one. Some experiment
results are given to show the efficiency of the new algorithm. In
Section \ref{verify}, we show how to construct a parameterized
deflated system  to refine and  compute verified error bounds for
the breath-one multiple roots. Some numerical examples are given
to demonstrate the performance of our algorithm. %We mention some
%ongoing research in Section \ref{con}.

\section{Notation and Preliminaries}
\label{pre}

Let $R=\mathbb{K}[\xx]$ denote a polynomial ring over the field
$\mathbb{K}$ of characteristic zero.  Let $I=(f_1,\ldots,f_n)$ be
an ideal of $R$, $\hat{\xx}\in\mathbb{K}^n$ an isolated root of
$I$, $m_{\hat{\xx}}=(x_1-\hat{x}_1,\ldots,x_n-\hat{x}_n)$ the
maximal ideal at $\hat{\xx}$. Suppose $Q_{\hat{\xx}}$ is the
isolated primary component whose associate prime is
$m_{\hat{\xx}}$, then the multiplicity $\mux$ of $\hat{\xx}$ is
defined as the dimension of the quotient ring $R/Q_{\hat{\xx}}$.

Let  $\mathbf{d}^{\alpha}_{\hat{\xx}}: R\rightarrow \mathbb{K}$ denote
the differential functional defined by
\begin{equation}
\mathbf{d}^{\alpha}_{\hat{\xx}}(g)=\frac{1} {\alpha_1!\cdots
\alpha_n!}\cdot\frac{\partial^{|\alpha|} g}{\partial
x_1^{\alpha_1}\cdots
\partial x_n^{\alpha_n}}(\hat{\xx}),\quad\forall g(\xx)\in R,
\end{equation}
for a point $\hat{\xx}\in \mathbb{K}^n$ and an array $\alpha\in
\mathbb{N}^n$. The normalized differentials have a useful
property: when $\hat{\xx}=\mathbf{0}$, we have
$\mathbf{d}^{\alpha}_{\mathbf{0}}(\xx^{\beta})=1$ if
$\alpha=\beta$ or $0$ otherwise. We may occasionally write
$\mathbf{d}^{\alpha}=d_1^{\alpha_1}d_2^{\alpha_2}\cdots
d_n^{\alpha_n}$ instead of $\mathbf{d}^{\alpha}_{\hat{\xx}}$ for
simplicity if $\hat{\xx}$ is clear from the context, where
$d_i^{\alpha_i}=\frac{1}{{\alpha_i}!}\frac{\partial^{\alpha_i}}{\partial
x_i^{\alpha_i}}$.

%
%We can now define the local dual space of an isolated point.

\begin{define}
The local dual space of $I$ at $\hat{\xx}$ is the subspace of elements of
$\mathfrak{D}_{\hat{\xx}}=\Span_\mathbb{K}\{\mathbf{d}^{\alpha}_{\hat{\xx}},\alpha\in\mathbb{N}^n\}$ that vanish on all the elements of $I$
\begin{equation}
\mathcal{D}_{\hat{\xx}}:=\{\Lambda\in \mathfrak{D}_{\hat{\xx}}\,\,|\,\,
\Lambda(f)=0, ~\forall f\in I\},
\end{equation}
where $\dim(\mathcal{D}_{\hat{\xx}})=\mux$.
\end{define}

Computing a closed basis of the local dual space is done essentially
by matrix-kernel computations
\citep{MMM:1995,Mourrain:1996,DZ:2005,WuZhi:2008,Zeng:2009}, which
are based on the \emph{stability} \emph{property} of
$\mathcal{D}_{\hat{\xx}}$:
\begin{equation}\label{closed}
\forall
\Lambda\in\mathcal{D}_{\hat{\xx}}^t,\,\,\Phi_{x_i}(\Lambda)\in\mathcal{D}_{\hat{\xx}}^{t-1},\,\,
i=1,\ldots,n,
\end{equation}
where $\mathcal{D}_{\hat{\xx}}^t$ denotes the subspace of $\mathcal{D}_{\hat{\xx}}$ of the degree less than or equal to $t$, for $t\in \mathbb{N}$, and $\Phi_{x_i}:\mathfrak{D}_{\hat{\xx}}\rightarrow\mathfrak{D}_{\hat{\xx}}$ are the linear \emph{anti-differentiation} \emph{operators} defined by
\begin{align*}
\Phi_{x_i}(\mathbf{d}^{\alpha}_{\hat{\xx}}):=\left\{
\begin{array}{ll}
\mathbf{d}^{(\alpha_1, \ldots, \alpha_{i}-1, \ldots, \alpha_n)}_{\hat{\xx}}, & \mbox{if $\alpha_{i} > 0$,}\\
0, & \mbox{otherwise.}
\end{array}
\right.
\end{align*}
\begin{lemma}\citep[Theorem 8.36]{Stetter:2004}\label{coeff} Suppose $\{\Lambda_1,\ldots,\Lambda_s\}$ is a closed basis of $\mathcal{D}_{\hat{\xx}}^{t-1}$, then an element $\Lambda\in\mathfrak{D}_{\hat{\xx}}$ lies in $\mathcal{D}_{\hat{\xx}}^{t}$ if and only if it satisfies (\ref{closed}) and $\Lambda(f_i)=0$ for $i=1,\ldots,n$.
\end{lemma}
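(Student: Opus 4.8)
The plan is to prove both implications of the biconditional directly from the definitions, using the stability property (\ref{closed}) together with the fact that $I=(f_1,\dots,f_n)$ is generated by the $f_i$. The forward direction is essentially a restatement of stability plus the defining vanishing condition of $\mathcal{D}_{\hat{\xx}}$, so the content is in the converse. First I would fix $\Lambda\in\mathfrak{D}_{\hat{\xx}}$ of degree $\le t$ satisfying (\ref{closed}) and $\Lambda(f_i)=0$ for $i=1,\dots,n$, and show $\Lambda(g)=0$ for every $g\in I$. Write $g=\sum_{j=1}^n h_j f_j$ with $h_j\in R$. The key identity I would invoke is the Leibniz-type rule relating the differential functional $\mathbf{d}^{\alpha}_{\hat{\xx}}$ applied to a product $h_j f_j$ to the anti-differentiation operators: for any $\Lambda\in\mathfrak{D}_{\hat{\xx}}$ and any monomial $\xx^{\gamma}$ (shifted to $\hat{\xx}$), one has $\Lambda(\xx^{\gamma}\cdot f)=\big(\xx^{\gamma}\circ\Lambda\big)(f)$, where $\xx^{\gamma}\circ\Lambda$ denotes the result of applying $\Phi_{x_1}^{\gamma_1}\cdots\Phi_{x_n}^{\gamma_n}$ to $\Lambda$. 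Extending by linearity over the $h_j$, it suffices to check $\big(\Phi_{x_1}^{\gamma_1}\cdots\Phi_{x_n}^{\gamma_n}(\Lambda)\big)(f_j)=0$ for all $\gamma\in\mathbb{N}^n$ and all $j$.

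The second step is an induction on $|\gamma|$. The base case $\gamma=0$ is exactly the hypothesis $\Lambda(f_j)=0$. For the inductive step, suppose the claim holds for all multi-indices of weight $<k$; given $|\gamma|=k$, pick a coordinate $i$ with $\gamma_i>0$ and write $\gamma=\gamma'+e_i$. Then $\Phi_{x_i}^{\gamma_i}\cdots(\Lambda)=\Phi_{x_i}\big(\text{something of degree}\le t-(k-1)\big)$; but here I would be more careful and argue as follows. By hypothesis (\ref{closed}), $\Phi_{x_i}(\Lambda)$ lies in $\mathcal{D}_{\hat{\xx}}^{t-1}$ — wait, that is only asserting membership in the dual space, which is what we want to establish, so I cannot use it circularly. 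Instead, the clean route is: the hypothesis gives $\Phi_{x_i}(\Lambda)\in\Span_{\mathbb K}\{\Lambda_1,\dots,\Lambda_s\}=\mathcal{D}_{\hat{\xx}}^{t-1}$ because $\{\Lambda_1,\dots,\Lambda_s\}$ is assumed to be a \emph{closed basis} of $\mathcal{D}_{\hat{\xx}}^{t-1}$ and (\ref{closed}) is precisely the statement that each $\Phi_{x_i}(\Lambda)$ is a $\mathbb K$-linear combination of the $\Lambda_\ell$. Since each $\Lambda_\ell\in\mathcal{D}_{\hat{\xx}}$ already vanishes on all of $I$, and since the subspace of functionals vanishing on $I$ is closed under the $\Phi_{x_i}$ (this is again the stability property, now applied to genuine dual elements), every $\Phi_{x_1}^{\gamma_1}\cdots\Phi_{x_n}^{\gamma_n}(\Lambda)$ is a linear combination of dual elements and hence annihilates $I$; in particular it annihilates each $f_j$. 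Feeding this back through the Leibniz identity of the first step yields $\Lambda(g)=\sum_j\Lambda(h_jf_j)=0$, so $\Lambda\in\mathcal{D}_{\hat{\xx}}$, and since $\degree\Lambda\le t$ we get $\Lambda\in\mathcal{D}_{\hat{\xx}}^{t}$.

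The main obstacle, and the point that needs the most care, is avoiding the circular use of (\ref{closed}): the condition (\ref{closed}) as a hypothesis on $\Lambda$ means "$\Phi_{x_i}(\Lambda)$ is in the span of the given closed basis of $\mathcal{D}_{\hat{\xx}}^{t-1}$", not merely "$\Phi_{x_i}(\Lambda)$ is some abstract dual element"; it is exactly this identification with the \emph{known} basis that lets one conclude $\Phi_{x_i}(\Lambda)$ annihilates $I$ without assuming what we are proving. Once that is pinned down, the Leibniz identity $\mathbf{d}^{\alpha}_{\hat{\xx}}(x_i\cdot h)=\mathbf{d}^{\alpha}_{\hat{\xx}}(h)\cdot(\text{shift})$ — more precisely $\Lambda(x_i h)=\Phi_{x_i}(\Lambda)(h)+\hat{x}_i\Lambda(h)$, which after translating $\hat{\xx}$ to the origin becomes simply $\Lambda(x_ih)=\Phi_{x_i}(\Lambda)(h)$ — reduces everything to an induction on the total degree of the cofactor monomials, and the argument closes. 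I would also remark that the forward implication needs no induction: if $\Lambda\in\mathcal{D}_{\hat{\xx}}^{t}$ then $\Lambda(f_i)=0$ by definition of the dual space, and $\Phi_{x_i}(\Lambda)\in\mathcal{D}_{\hat{\xx}}^{t-1}=\Span_{\mathbb K}\{\Lambda_1,\dots,\Lambda_s\}$ by the stability property (\ref{closed}) applied to the genuine dual element $\Lambda$.
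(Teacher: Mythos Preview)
The paper does not supply a proof of this lemma; it is quoted from \citep[Theorem~8.36]{Stetter:2004} and used as a black box. Your argument is correct and is essentially the standard one found in that reference: the forward direction is immediate from the definition of $\mathcal{D}_{\hat{\xx}}$ together with the stability property, and for the converse the key device is the Leibniz-type identity $\Lambda\big((x_i-\hat x_i)h\big)=\Phi_{x_i}(\Lambda)(h)$, which reduces $\Lambda(h_jf_j)=0$ to checking that each $\Phi_{x_1}^{\gamma_1}\cdots\Phi_{x_n}^{\gamma_n}(\Lambda)$ annihilates $f_j$. Your self-correction is exactly the right observation: the hypothesis~(\ref{closed}), read as a condition on $\Lambda$, asserts $\Phi_{x_i}(\Lambda)\in\Span_{\mathbb K}\{\Lambda_1,\dots,\Lambda_s\}=\mathcal{D}_{\hat{\xx}}^{t-1}$, a space already known to annihilate $I$ and to be closed under all $\Phi_{x_j}$; so for $|\gamma|\ge 1$ a single anti-differentiation already lands in $\mathcal{D}_{\hat{\xx}}^{t-1}$ and all further ones remain there. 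No genuine induction is needed once this is said.

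Two small expository points. First, you assume $\deg\Lambda\le t$ without justification; this is in fact a consequence of the hypothesis, since any top-degree monomial $\mathbf d^{\alpha}$ in $\Lambda$ with $|\alpha|>t$ has some $\alpha_i>0$, and then $\mathbf d^{\alpha-e_i}$ survives uncancelled in $\Phi_{x_i}(\Lambda)$ with degree exceeding $t-1$. Second, the mid-proof ``wait'' and the aborted induction should be excised in a final write-up: state at the outset that (\ref{closed}) means $\Phi_{x_i}(\Lambda)\in\mathcal{D}_{\hat{\xx}}^{t-1}$ for each $i$, invoke the Leibniz identity once, and the argument becomes a two-line computation.
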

In fact, (\ref{closed}) is equivalent to finding $\lambda_{i,k}\in\mathbb{K}$ such that $\Lambda\in\mathfrak{D}_{\hat{\xx}}$ satisfying
\begin{equation}\label{findcoeff}
\Phi_{x_i}(\Lambda)=\lambda_{i,1}\Lambda_1+\lambda_{i,2}\Lambda_2+\cdots+\lambda_{i,s}\Lambda_s,\mbox{
for }i=1,\ldots,n.
\end{equation}
If $\lambda_{i,k}$ are known, we can compute $\Lambda$ by the
following formula  \citep{Mourrain:1996}
\begin{equation}\label{constructL}
\Lambda=\sum_{j=1}^{s}\lambda_{1,j}\Psi_{x_1}(\Lambda_j)+\sum_{j=1}^{s}\lambda_{2,j}\Psi_{x_2}(\Lambda_j)+\cdots+\sum_{j=1}^{s}
\lambda_{n,j}\Psi_{x_n}(\Lambda_j),
\end{equation}
where the \emph{differentiation} \emph{operators} $\Psi_{x_i}:\mathfrak{D}_{\hat{\xx}}\rightarrow\mathfrak{D}_{\hat{\xx}}$ are defined by
\begin{align*}
\Psi_{x_i}(\mathbf{d}^{\alpha}_{\hat{\xx}}):=\left\{
\begin{array}{ll}
\mathbf{d}^{(\alpha_1, \ldots, \alpha_{i}+1, \ldots, \alpha_n)}_{\hat{\xx}}, & \mbox{if $\alpha_{1}=\cdots=\alpha_{i-1}=0$,}\\
0, & \mbox{otherwise.}
\end{array}
\right.
\end{align*}

Here and hereafter, let  $J_F(\hat{\xx})$ denote the Jacobian
matrix of the polynomial system  $F$ evaluated at $\hat{\xx}$. It
has been noticed in \citep{Stanley:1973, DZ:2005} that when the
corank of the Jacobian matrix $\JF(\hat{\xx})$ is one,
$\mathcal{D}_{\hat{\xx}}$ has the important property:
\[\dim(\mathcal{D}_{\hat{\xx}}^{t})-\dim(\mathcal{D}_{\hat{\xx}}^{t-1})=1,
~\text{for}~ 1 \leq t \leq \mu-1.\] %This can be easily proved from properties
%$(i)$ and $(ii)$ of Lemma \ref{HF}, and obviously $N=\mux-1$ for
%this special case.
 Hence, it is also called the breadth one case in \citep{DZ:2005}.
 For this special case, in
\citep{LiZhi:2009}, under the assumption that the first column of
$\JF(\hat{\xx})$ is zero, we employ both normalization and reduction
techniques to compute a closed basis of $\mathcal{D}_{\hat{\xx}}$
very efficiently by solving $\mux-1$ linear systems with the size
bounded by $n\times (n-1)$.

\begin{theorem}\citep[Theorem 3.1]{LiZhi:2009}\label{withtran}
Suppose $\hat{\xx}$ is an isolated breath-one singular root of a
given polynomial system $F=\{f_1,\ldots,f_n\}$ with the
multiplicity $\mux$, the first column of the Jacobian matrix
 $\JF(\hat{\xx})$ is zero. Set $\Lambda_1=1$ and $\Lambda_2=d_1$, then we can construct $\Lambda_k$ incrementally for $k$ from $3$
by
\begin{equation}\label{L}
\Lambda_k=\Delta_k+a_{k,2}d_2+a_{k,3}d_3+\cdots+a_{k,n}d_n,
\end{equation}
where $\Delta_k$ is a differential functional which has no free parameters and can be
obtained from previous computed $\{\Lambda_1,\Lambda_2,\ldots,\Lambda_{k-1}\}$ by
\begin{equation}\label{constructP}
\Delta_k=\Psi_{x_1}(\Lambda_{k-1})+\sum_{j=2}^{k-1}a_{j,2}\Psi_{x_2}(\Lambda_j)+\cdots+\sum_{j=2}^{k-1}a_{j,n}\Psi_{x_n}(\Lambda_j).
\end{equation}
The parameters $a_{k,j}$, for $j=2,\ldots,n$, are determined by
solving
\begin{equation}\label{kthcoeff}
\SJF(\hat{\xx})\cdot\left[\begin{array}{c}
  a_{k,2}  \\
  \vdots   \\
  a_{k,n}
\end{array}
\right]=-\left[\begin{array}{c}
  \Delta_k(f_1)  \\
  \vdots   \\
  \Delta_k(f_n)
\end{array}
\right],
\end{equation}
where $\SJF(\hat{\xx})$ consists of the last $n-1$ columns of $\JF(\hat{\xx})$. This process will be
stopped if there is no
solution for (\ref{kthcoeff}).  We get the multiplicity $\mu=k-1$ and  %. At this moment, $k=\mux+1$
 $\{\Lambda_1,\Lambda_2,\ldots,\Lambda_{\mux}\}$ a closed basis of
the local dual space $\mathcal{D}_{\hat{\xx}}$.
\end{theorem}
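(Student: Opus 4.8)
The plan is to verify that the construction in the theorem is just a concrete instance of the general matrix-kernel characterization recalled in Lemma~\ref{coeff} combined with equations (\ref{findcoeff})--(\ref{constructL}), specialized to the breadth-one situation. First I would argue that in the breadth one case one may normalize the closed basis so that $\Lambda_1=1$ and $\Lambda_2=d_1$: since $\dim(\mathcal{D}_{\hat{\xx}}^0)=1$ and $\dim(\mathcal{D}_{\hat{\xx}}^1)-\dim(\mathcal{D}_{\hat{\xx}}^0)=1$, the degree-one part is spanned by a single nonzero functional $\sum_i c_i d_i$ with $\sum_i c_i \tfrac{\partial f_j}{\partial x_i}(\hat{\xx})=0$; because the first column of $J_F(\hat{\xx})$ is zero, $d_1$ itself lies in $\mathcal{D}_{\hat{\xx}}^1$, and (again by the dimension count) we can take $\Lambda_2=d_1$. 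This is where the hypothesis that the first column vanishes does its work, and it also fixes the normalization that makes the later linear systems square of size $n\times(n-1)$.

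Next I would set up the inductive step. Assume $\{\Lambda_1,\dots,\Lambda_{k-1}\}$ is a closed basis of $\mathcal{D}_{\hat{\xx}}^{k-2}$ with the stated shape, each $\Lambda_j$ having leading term of degree $j-1$ in the variable $x_1$ after the normalization. By Lemma~\ref{coeff}, a functional $\Lambda$ of degree $\le k-1$ lies in $\mathcal{D}_{\hat{\xx}}^{k-1}$ iff it satisfies the stability condition (\ref{closed}) and $\Lambda(f_i)=0$ for all $i$. I would write $\Lambda=\Lambda_k$ in the form (\ref{findcoeff}): $\Phi_{x_i}(\Lambda_k)$ must be a $\mathbb{K}$-combination of $\Lambda_1,\dots,\Lambda_{k-1}$. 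The key observation is that in the breadth-one normalization the coefficients $\lambda_{i,j}$ are almost entirely forced: applying $\Phi_{x_1}$ must hit $\Lambda_{k-1}$ with coefficient $1$ (to get the right leading term), and the compatibility/commutation relations among the $\Phi_{x_i}$ together with the normalization $\Psi_{x_i}(\mathbf{d}^\alpha)=0$ unless $\alpha_1=\cdots=\alpha_{i-1}=0$ force $\Phi_{x_i}(\Lambda_k)=\sum_{j=2}^{k-1} a_{j,i}\Lambda_j$ for $i\ge 2$ with the \emph{same} unknowns $a_{j,i}$ that appear in the lower-degree basis elements, plus the $n-1$ genuinely new unknowns $a_{k,2},\dots,a_{k,n}$. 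Plugging these into the reconstruction formula (\ref{constructL}) gives exactly $\Lambda_k=\Delta_k+\sum_{j=2}^n a_{k,j}d_j$ with $\Delta_k$ as in (\ref{constructP}), so $\Delta_k$ is determined by the previously computed data and carries no free parameter.

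Then I would impose the remaining conditions $\Lambda_k(f_i)=0$ for $i=1,\dots,n$. Since $\Lambda_k=\Delta_k+\sum_{j=2}^n a_{k,j}d_j$ and $d_j(f_i)=\tfrac{\partial f_i}{\partial x_j}(\hat{\xx})$ is the $(i,j)$ entry of $J_F(\hat{\xx})$, linearity gives $\Delta_k(f_i)+\sum_{j=2}^n a_{k,j}\,(J_F(\hat{\xx}))_{i,j}=0$, i.e. precisely the linear system (\ref{kthcoeff}) with the matrix $\SJF(\hat{\xx})$ formed from the last $n-1$ columns of $J_F(\hat{\xx})$ (the first column drops out because $d_1$ is already absorbed into $\Delta_k$ via $\Lambda_2$). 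Thus $\Lambda_k\in\mathcal{D}_{\hat{\xx}}$ exists iff (\ref{kthcoeff}) is solvable, and when solvable the new basis element extends the closed basis by one dimension, consistent with $\dim(\mathcal{D}_{\hat{\xx}}^{t})-\dim(\mathcal{D}_{\hat{\xx}}^{t-1})=1$ for $t\le\mu-1$. Finally, the process terminates exactly when (\ref{kthcoeff}) has no solution; by the breadth-one dimension formula this first happens at $t=\mu$, so the last successfully constructed index is $k-1=\mu$, giving multiplicity $\mu=k-1$ and $\{\Lambda_1,\dots,\Lambda_\mu\}$ a closed basis of $\mathcal{D}_{\hat{\xx}}$.

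The main obstacle I anticipate is the bookkeeping in the inductive step: showing rigorously that the anti-differentiation constraints (\ref{findcoeff}) for $i=2,\dots,n$ do not introduce new free parameters beyond $a_{k,2},\dots,a_{k,n}$, and that they are automatically consistent with the $x_1$-constraint. This requires carefully exploiting the commutation relation $\Phi_{x_i}\Phi_{x_j}=\Phi_{x_j}\Phi_{x_i}$ applied to $\Lambda_k$, the support restriction built into $\Psi_{x_i}$, and the inductive shape of the $\Lambda_j$ — essentially a uniqueness argument for the coefficients $\lambda_{i,j}$ in (\ref{findcoeff}). Once that is in place, the reduction to the single square linear system (\ref{kthcoeff}) and the termination count are routine consequences of Lemma~\ref{coeff} and the breadth-one dimension property.
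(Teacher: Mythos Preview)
The paper does not give its own proof of Theorem~\ref{withtran}; the result is quoted from \citep[Theorem~3.1]{LiZhi:2009} and stated without argument. What the paper \emph{does} prove is the parallel Theorem~\ref{withouttran}, and that proof rests on exactly the scaffolding you use: Lemma~\ref{coeff} to reduce membership in $\mathcal{D}_{\hat{\xx}}^{k-1}$ to the stability condition plus $\Lambda_k(f_i)=0$, a free-parameter reduction (Lemma~\ref{reduction} and Lemma~\ref{freepara}) showing that after normalization the only undetermined data are $a_{k,2},\dots,a_{k,n}$, and finally the observation that $\Lambda_k(f_i)=0$ becomes the linear system~(\ref{kthcoeff}). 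Your outline is therefore correct and follows the same route.

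Two small points of bookkeeping where your sketch is slightly off relative to the paper's lemmas. First, the formula you display for $\Phi_{x_i}(\Lambda_k)$ with $i\ge 2$ should read
\[
\Phi_{x_i}(\Lambda_k)=a_{k,i}\Lambda_1+a_{k-1,i}\Lambda_2+\cdots+a_{2,i}\Lambda_{k-1},
\]
so the new unknown $a_{k,i}$ enters as the coefficient of $\Lambda_1$, not as a separate add-on (cf.\ Lemma~\ref{freepara}); this is why, after applying (\ref{constructL}), the new unknowns surface precisely as the $d_2,\dots,d_n$ terms in (\ref{L}). Second, the ``main obstacle'' you anticipate---that the constraints for $i\ge 2$ are consistent and introduce no further parameters---is exactly what the paper isolates as Lemma~\ref{reduction} (the $d_1^{k-1}$ term survives, so $\Phi_{x_1}(\Lambda_k)=\Lambda_{k-1}$ after normalization) together with the coefficient-matching argument in the proof of Lemma~\ref{freepara}. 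Filling in those two lemmas would make your argument complete and essentially identical to the paper's treatment of the analogous Theorem~\ref{withouttran}.
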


In \citep{LiZhi:2009}, when the first column of $\JF(\hat{\xx})$ is
not zero, we apply a linear transformation of variables to obtain a
new system and a new root, which will satisfy the assumptions of
Theorem \ref{withtran}. Finally, we can derive a closed basis of the
local dual space of the original system at the original root by
transforming back the computed basis.  Unfortunately, these
transformations always result in  dense systems even the original
ones are sparse. % and the computed basis are always not well reduced.

\begin{example}\label{ex1}\citep{Ojika:1987}
Consider a polynomial system
\begin{equation*}
F=\left\{x_1^2+x_2-3,x_1+\frac{1}{8}x_2^2-\frac{3}{2}.\right\}
\end{equation*}
The system F has $(1,2)$ as a $3$-fold isolated zero.
\end{example}
The Jacobian matrix of $F$ at $(1,2)$ is
\[\JF(1,2)=\left[\begin{array}{cc}
  2 & 1  \\
  1 & \frac{1}{2}
\end{array}
\right],\] which has a non-trivial null vector
$\mathbf{r}=(-\frac{1}{2},1)^T$. We apply a linear
transformation of the variables
\[x_1=-\frac{1}{2}y_1+2y_2,x_2=y_1+y_2,\]
to obtain a new  dense  polynomial system
\[G=\left\{\frac{1}{4}y_1^2-2y_1y_2+4y_2^2+y_1+y_2-3,\frac{1}{8}y_1^2+\frac{1}{4}y_1y_2+\frac{1}{8}y_2^2-\frac{1}{2}y_1+2y_2-\frac{3}{2}.\right\}\]
The returned closed basis of the local dual space of $G$ at the new
point $(\frac{6}{5},\frac{4}{5})^T$ by Theorem \ref{withtran} is
\[\Lambda_1=1,\Lambda_2=d_1,\Lambda_3=d_1^2-\frac{1}{20}d_2,\]
which can be transformed back to a closed basis of $F$ at $(1,2)$
\[\Lambda_1=1,\Lambda_2=-\frac{1}{2}d_1+d_2,\Lambda_3=\frac{1}{4}d_1^2-\frac{1}{2}d_1d_2+d_2^2-\frac{1}{10}d_1-\frac{1}{20}d_2.\]

\section{A Modified Algorithm for Computing a Closed basis of the Local Dual Space}
\label{dual}

In this section, we  show how to avoid the linear transformations in
computing a closed basis
$\{\Lambda_1,\Lambda_2,\ldots,\Lambda_{\mux}\}$ of
the local dual space $\mathcal{D}_{\hat{\xx}}$. %We select a column
%of $\JF(\hat{\xx})$, which can be written as a linear combination
%of the other $n-1$ linearly independent columns, then use the
%respect variable to apply the normalization and reduction to avoid
%those linear transformations.

Let $\mathbf{r}=(r_1,r_2,\ldots,r_n)^T$ be a non-trivial null
vector of $\JF(\hat{\xx})$. Without loss of
generality, we assume
\begin{equation}\label{r1} |r_{1}|\geq |r_{j}|,\mbox{ for }1\leq j\leq n.
\end{equation}
Otherwise, one can perform changes of variables to guarantee
(\ref{r1}) is satisfied. Then we normalize $\mathbf{r}$ by $r_1$
and derive that
\begin{equation}\label{a2}
\mathbf{a}_2=\left[1,a_{2,2},\ldots,a_{2,n}\right]^T=\left[1,\frac{r_2}{r_1},\ldots,\frac{r_n}{r_1}\right]^T
\end{equation}
is also a non-trivial null vector of $\JF(\hat{\xx})$. We set
$\Lambda_1=1$ and
\begin{equation}\label{initialization}
\Lambda_2=d_1+a_{2,2}d_2+\cdots+a_{2,n}d_n, ~|a_{2,2}| \leq 1,
\ldots, |a_{2,n}| \leq 1.
\end{equation}
\begin{lemma}\label{reduction}
Under the assumption of (\ref{initialization}), the differential
functional monomial $d_1^{k-1}$ does not vanish in $\Lambda_k$,
for $k=3,\ldots,\mux$.
\end{lemma}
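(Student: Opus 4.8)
The plan is to extract the coefficient of the monomial $d_1^{k-1}$ in $\Lambda_k$ and to prove by induction on $k$ that it is nonzero (in fact equal to $1$). After a translation we may assume $\hat{\xx}=\mathbf{0}$; then, by the normalization property $\mathbf{d}^{\alpha}_{\mathbf{0}}(\xx^{\beta})=\delta_{\alpha,\beta}$, the coefficient of $d_1^{k-1}$ in a functional $\Lambda=\sum_{\alpha}c_{\alpha}\mathbf{d}^{\alpha}_{\mathbf{0}}$ is exactly $\Lambda(x_1^{k-1})$, so it suffices to show $\Lambda_k(x_1^{k-1})\neq 0$ for $3\le k\le\mux$.

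First I would use the incremental shape of the basis (the analogue of (\ref{L})--(\ref{constructP}) in the present setting, where $\Lambda_2$ is taken from (\ref{initialization}) and no change of coordinates is performed): $\Lambda_k=\Delta_k+a_{k,2}d_2+\cdots+a_{k,n}d_n$ with $\Delta_k=\Psi_{x_1}(\Lambda_{k-1})+\sum_{i=2}^{n}\sum_{j=2}^{k-1}a_{j,i}\Psi_{x_i}(\Lambda_j)$. The key point is that the pure power $d_1^{k-1}$ can be produced only by the single summand $\Psi_{x_1}(\Lambda_{k-1})$: by definition $\Psi_{x_i}(\mathbf{d}^{\alpha})\neq 0$ forces $\alpha_1=\cdots=\alpha_{i-1}=0$, and then its first exponent equals $\alpha_1$, so for $i\ge 2$ the functional $\Psi_{x_i}(\Lambda_j)$ has no monomial of positive $d_1$-degree; and the linear correction $a_{k,2}d_2+\cdots+a_{k,n}d_n$ contributes nothing to $d_1^{k-1}$ once $k\ge 3$. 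Since $\Psi_{x_1}(\mathbf{d}^{\alpha})=\mathbf{d}^{(\alpha_1+1,\alpha_2,\ldots,\alpha_n)}$, the only monomial of $\Lambda_{k-1}$ carried onto $d_1^{k-1}$ is $d_1^{k-2}$, and hence the coefficient of $d_1^{k-1}$ in $\Lambda_k$ equals the coefficient of $d_1^{k-2}$ in $\Lambda_{k-1}$. As the coefficient of $d_1$ in $\Lambda_2$ is $1$ by (\ref{initialization}), the induction closes and the coefficient of $d_1^{k-1}$ in $\Lambda_k$ is $1$, hence nonzero, for all $k=2,\ldots,\mux$. Hypothesis (\ref{r1}) enters only to guarantee $r_1\neq 0$, which is what makes the normalization (\ref{a2})--(\ref{initialization}), and therefore the whole induction, legitimate.

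I expect the only real obstacle to be the bookkeeping in the step above: because $\Lambda_2,\ldots,\Lambda_{k-1}$ now carry genuine $d_2,\ldots,d_n$ components, one must check that none of the cross terms arising in $\Delta_k$ can fall on the pure power $d_1^{k-1}$, and this is precisely what the shape of the operators $\Psi_{x_i}$ rules out. As a cross-check one can obtain the statement structurally and independently of the construction: in the breadth-one (curvilinear) case the local ring is isomorphic to $\K[[t]]/(t^{\mux})$ \citep{DZ:2005}, so $\mathcal{D}_{\mathbf{0}}$ is spanned by the functionals $L_j\colon g\mapsto[t^{j}]\,g(\phi(x_1),\ldots,\phi(x_n))$, $j=0,\ldots,\mux-1$, attached to a branch $\phi(x_i)=c_{i,1}t+c_{i,2}t^2+\cdots$ whose tangent vector is proportional to $\mathbf{r}$, so that $c_{1,1}\neq 0$; each $L_j$ has order exactly $j$, whence $\mathcal{D}_{\mathbf{0}}^{t}=\Span\{L_0,\ldots,L_t\}$ and the degree-$(k-1)$ basis element can be written $\Lambda_k=\sum_{j=0}^{k-1}\gamma_j L_j$ with $\gamma_{k-1}\neq 0$ (else $\Lambda_k$ would have degree at most $k-2$); then $\Lambda_k(x_1^{k-1})=\gamma_{k-1}\,[t^{k-1}]\phi(x_1)^{k-1}=\gamma_{k-1}\,c_{1,1}^{\,k-1}\neq 0$.
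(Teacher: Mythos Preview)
Your primary argument assumes the incremental formula
\[
\Lambda_k=\Psi_{x_1}(\Lambda_{k-1})+\sum_{i=2}^{n}\sum_{j=2}^{k-1}a_{j,i}\Psi_{x_i}(\Lambda_j)+\sum_{j=2}^{n}a_{k,j}d_j,
\]
but in the paper this specific shape---with the $\Psi_{x_1}(\Lambda_{k-1})$ term carrying coefficient exactly $1$ and no other $\Psi_{x_1}(\Lambda_j)$ present---is not available at this point. It is established only \emph{after} Lemma~\ref{reduction}, via the normalization in Remark~\ref{normalized1} and then Lemma~\ref{freepara}, and finally recorded in Theorem~\ref{withouttran}. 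At the stage of the lemma, $\Lambda_k$ is merely an element of $\mathcal{D}_{\hat\xx}^{k-1}\setminus\mathcal{D}_{\hat\xx}^{k-2}$; the general formula~(\ref{constructL}) reads
\[
\Lambda_k=\sum_{i=1}^{n}\sum_{j=1}^{k-1}\lambda_{i,j}\,\Psi_{x_i}(\Lambda_j),
\]
and your (correct) observation that only $i=1$ contributes to $d_1^{k-1}$ then shows that the coefficient of $d_1^{k-1}$ in $\Lambda_k$ equals $\lambda_{1,k-1}$ times the coefficient of $d_1^{k-2}$ in $\Lambda_{k-1}$. So the entire content of the lemma is precisely that $\lambda_{1,k-1}\neq 0$, which you have silently set equal to~$1$. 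The paper establishes $\lambda_{1,k-1}\neq 0$ by a different mechanism: take $t$ with $\lambda_{t,k-1}\neq 0$ (some such $t$ exists since $\deg\Lambda_k=k-1$), use $\Phi_{x_t}(\Lambda_k)=\sum_j\lambda_{t,j}\Lambda_j$ together with the induction hypothesis to see that $d_1^{k-2}d_t$ survives in $\Lambda_k$, and then apply $\Phi_{x_1}$ and note that the degree-$(k-2)$ monomial $d_1^{k-3}d_t$ can only come from the $\Lambda_{k-1}$ summand in $\Phi_{x_1}(\Lambda_k)=\sum_j\lambda_{1,j}\Lambda_j$, forcing $\lambda_{1,k-1}\neq 0$.

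Your second argument, via the curvilinear parametrization $\phi$ with $c_{1,1}\neq 0$, is correct and constitutes a genuinely different proof. It is more structural---once one accepts the isomorphism of the local ring with $\K[[t]]/(t^{\mux})$, the computation $\Lambda_k(x_1^{k-1})=\gamma_{k-1}c_{1,1}^{\,k-1}\neq 0$ settles the matter for every order-$(k-1)$ basis element in one stroke, with no induction on $k$. The paper's route stays entirely inside the $\Phi/\Psi$ calculus it has already set up and does not import the structure theorem for curvilinear schemes, which keeps the argument self-contained; your route is shorter but relies on an external input.
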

\begin{proof}
If $\mux\geq3$, $\Lambda_3$ will satisfy (\ref{closed}) and
(\ref{findcoeff}), and at least one of
$\lambda_{1,2}$,$\lambda_{2,2}$,$\ldots$, $\lambda_{n,2}$ is not zero.
Let $t$ be the integer such that  $\lambda_{t,2}\neq0$, then the
differential functional monomial $d_1 d_t$ does not vanish in
$\Lambda_3$ according to (\ref{constructL}). It follows that $d_t$
does not vanish in $\Phi_{x_1}(\Lambda_3)$.  Then from
(\ref{findcoeff}) and  (\ref{initialization}), we derive that
$\lambda_{1,2}\neq0$. Therefore, $d_1^2$ does not vanish in
$\Lambda_3$. % moreover, $a_{2,t}\neq0$.

The rest proof is done by induction. Assume the lemma is true for
$k$ and $k<\mux$, then similar to the analysis above,
$\Lambda_{k+1}$  satisfies   (\ref{closed}) and (\ref{findcoeff}).
Let $t$ be the integer such that $\lambda_{t,k}\neq0$, then
$d_1^{k-1}d_t$ does not vanish in $\Lambda_{k+1}$. It follows that
$d_1^{k-2}d_t$ does not vanish in $\Phi_{x_1}(\Lambda_{k+1})$.
Since
$\dim(\mathcal{D}_{\hat{\xx}}^{k})-\dim(\mathcal{D}_{\hat{\xx}}^{k-1})=1$,
 $\degree(\Lambda_k)=k-1$,  we derive that $\lambda_{1,k}\neq0$.
Therefore, $d_1^{k}$ does not vanish in $\Lambda_{k+1}$.
\end{proof}

\begin{remark}\label{normalized1}
According to  Lemma \ref{reduction}, the coefficient of $d_1^{k-1}$
in $\Lambda_k$ is not zero, then can be normalized  to be $1$.
Moreover, we can assume that $\Lambda_k$ does not have terms
$\{1,d_1,d_1^2,\ldots,d_1^{k-2}\}$ inside. Otherwise, one can reduce
them by $\{\Lambda_1,\Lambda_2,\ldots,\Lambda_{k-1}\}$. These
normalization and reduction can help us reduce the number of free
parameters in (\ref{findcoeff}) to $n-1$.
\end{remark}

\begin{lemma}\label{freepara}
Under the assumption of (\ref{initialization}) and after performing the normalization and reduction above, we have
\begin{equation} \label{phix1}
\left\{
\begin{array}{l}
\Phi_{x_1}(\Lambda_k)=\Lambda_{k-1}, \\
\Phi_{x_i}(\Lambda_k)=a_{k,i}\Lambda_1+a_{k-1,i}\Lambda_2+\cdots+a_{2,i}\Lambda_{k-1},\mbox{
for }i=2,\ldots,n,
\end{array}
\right.
\end{equation}
where $a_{j,i}$ is the coefficient of $d_i$ in $\Lambda_j$, for
$2\leq j\leq k$ and $k\leq \mux$.
\end{lemma}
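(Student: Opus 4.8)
The plan is to prove the two identities of (\ref{phix1}) in turn: first the single identity $\Phi_{x_1}(\Lambda_k)=\Lambda_{k-1}$, and then, bootstrapping from it, the identities for $i=2,\ldots,n$. The common starting point is that $\Lambda_k$ has degree $k-1$, so by the stability property (\ref{closed}) (together with the breadth-one fact $\dim\mathcal{D}_{\hat{\xx}}^{t}=t+1$, which makes $\{\Lambda_1,\ldots,\Lambda_{k-1}\}$ a basis of $\mathcal{D}_{\hat{\xx}}^{k-2}$) each $\Phi_{x_i}(\Lambda_k)$ lies in $\mathcal{D}_{\hat{\xx}}^{k-2}$ and can be written as in (\ref{findcoeff}) as $\Phi_{x_i}(\Lambda_k)=\sum_{j=1}^{k-1}\lambda_{i,j}\Lambda_j$. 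Everything then reduces to pinning down the coefficients $\lambda_{i,j}$.

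For $i=1$ I would project functionals onto the span of the pure powers $1,d_1,d_1^2,\ldots$. By Lemma \ref{reduction} and the normalization of Remark \ref{normalized1}, the pure-$d_1$ part of $\Lambda_j$ is exactly $d_1^{j-1}$, and apart from $d_1^{k-1}$ every monomial occurring in $\Lambda_k$ carries a factor $d_i$ with $i\geq 2$; such a factor is retained (or the whole monomial is annihilated) under $\Phi_{x_1}$, so it never produces a pure power of $d_1$. Hence the pure-$d_1$ part of $\Phi_{x_1}(\Lambda_k)$ equals $\Phi_{x_1}(d_1^{k-1})=d_1^{k-2}$, while the pure-$d_1$ part of $\sum_{j=1}^{k-1}\lambda_{1,j}\Lambda_j$ equals $\sum_{j=1}^{k-1}\lambda_{1,j}d_1^{j-1}$; comparing the two forces $\lambda_{1,k-1}=1$ and $\lambda_{1,j}=0$ for $j<k-1$, that is, $\Phi_{x_1}(\Lambda_k)=\Lambda_{k-1}$. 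The argument is uniform in $k$, so in fact $\Phi_{x_1}(\Lambda_p)=\Lambda_{p-1}$ for all $2\leq p\leq\mux$, together with $\Phi_{x_1}(\Lambda_1)=0$ for degree reasons.

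For $i\geq 2$ I would combine this with two elementary observations. First, $\Phi_{x_1}$ and $\Phi_{x_i}$ commute (on a basis differential $\mathbf{d}^{\alpha}_{\hat{\xx}}$ both orders lower the first and $i$-th exponents by one when both are positive, and return $0$ otherwise), so iterating the previous paragraph gives $\Phi_{x_1}^{\,m}(\Lambda_p)=\Lambda_{p-m}$ for $0\leq m\leq p-1$ and $\Phi_{x_1}^{\,m}(\Lambda_p)=0$ for $m\geq p$. Second, the constant term of $\Phi_{x_i}(\Lambda_p)$ equals the coefficient of $d_i$ in $\Lambda_p$, namely $a_{p,i}$, while among the $\Lambda_j$'s only $\Lambda_1=1$ has a nonzero constant term; hence the $\Lambda_1$-coefficient of $\Phi_{x_i}(\Lambda_p)$ equals $a_{p,i}$. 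Now fix $k$, write $\Phi_{x_i}(\Lambda_k)=\sum_{\ell=1}^{k-1}\lambda_{i,\ell}\Lambda_\ell$, and apply $\Phi_{x_1}^{\,j-1}$ for a fixed $j\in\{1,\ldots,k-1\}$: by commutativity and the iterated first identity the left side becomes $\Phi_{x_i}(\Lambda_{k-j+1})$, and the right side becomes $\sum_{\ell\geq j}\lambda_{i,\ell}\Lambda_{\ell-j+1}$, whose $\Lambda_1$-coefficient is $\lambda_{i,j}$. Equating $\Lambda_1$-coefficients yields $\lambda_{i,j}=a_{k-j+1,i}$, which is precisely $\Phi_{x_i}(\Lambda_k)=a_{k,i}\Lambda_1+a_{k-1,i}\Lambda_2+\cdots+a_{2,i}\Lambda_{k-1}$; the case $k=2$ is immediate from (\ref{initialization}).

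The step I expect to be the real obstacle is the $i=1$ identity: one must rule out that anti-differentiating a ``mixed'' monomial of $\Lambda_k$ by $d_1$ contributes a pure power $d_1^{m}$ with $m<k-2$, and this is exactly where the normalization and reduction recorded in Remark \ref{normalized1} (no $1,d_1,\ldots,d_1^{k-2}$ terms and $d_1^{k-1}$-coefficient equal to $1$) are indispensable. Once that identity holds, the rest is bookkeeping with the commuting operators $\Phi_{x_i}$ and a comparison of constant terms.
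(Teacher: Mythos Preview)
Your argument is correct and follows the same overall architecture as the paper's proof: first establish $\Phi_{x_1}(\Lambda_k)=\Lambda_{k-1}$ from the normalization/reduction of Remark~\ref{normalized1}, then iterate this to get $\Phi_{x_1}^{\,j-1}(\Lambda_k)=\Lambda_{k-j+1}$ and read off the coefficients $\lambda_{i,j}$ for $i\geq 2$. The one place you diverge is in how those coefficients are extracted. The paper appeals to the reconstruction formula~(\ref{constructL}) to identify $\lambda_{i,j}$ with the coefficient of $d_1^{j-1}d_i$ in $\Lambda_k$, and then uses $\Phi_{x_1}^{\,j-1}(\Lambda_k)=\Lambda_{k-j+1}$ to rewrite that as the coefficient $a_{k-j+1,i}$ of $d_i$ in $\Lambda_{k-j+1}$. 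You instead use the commutativity of the anti-differentiation operators $\Phi_{x_1}$ and $\Phi_{x_i}$ to push $\Phi_{x_1}^{\,j-1}$ through and compare constant terms directly. Your route is a little more self-contained, since it avoids invoking the $\Psi$-operators and the somewhat terse claim based on~(\ref{constructL}); the paper's route makes more explicit which monomial of $\Lambda_k$ carries the coefficient $\lambda_{i,j}$. Either way the substance is the same.
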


\begin{proof}
By Lemma \ref{reduction} and Remark \ref{normalized1}, we know that
$\Lambda_k$ has  a term $d_1^{k-1}$ and there are  no terms of
$\{1,d_1,d_1^2,\ldots,d_1^{k-2}\}$ in $\Lambda_k$. Hence, according
to (\ref{findcoeff}), we derive that
$\Phi_{x_1}(\Lambda_k)=\Lambda_{k-1}$. Furthermore, since
$\Phi_{x_i}(\Lambda_k) \in \mathcal{D}_{\hat{\xx}}^{k-2}$, we have
\[\Phi_{x_i}(\Lambda_k)=\lambda_{i,1}\Lambda_1+\lambda_{i,2}\Lambda_2+\cdots+\lambda_{i,k-1}\Lambda_{k-1},
~\mbox{for}~ i=2,\ldots,n.\]
 Using (\ref{constructL}),
  we claim that $\lambda_{i,j}$  is equal to the coefficient of
% $=$\emph{coeff}
 $(d_1^{j-1}d_i)$ in $\Lambda_k$. On the other hand, we know that $\Phi_{x_1}^{j-1}(\Lambda_k)=\Lambda_{k-j+1}$, hence
  the coefficient of  %\emph{coeff}
 $(d_1^{j-1}d_i)$ in $\Lambda_k$ is equal to the coefficient of % $=$\emph{coeff}
 $d_i$ in $\Lambda_{k-j+1}$ which is equal to $a_{k-j+1,i}$. Hence,
 $\lambda_{i,j}=a_{k-j+1,i}$ for $2 \leq j \leq k-1$ and we prove
 the second equality in (\ref{phix1}).
\end{proof}

According to Lemma \ref{freepara}, from a closed basis
$\{\Lambda_1,\Lambda_2,\ldots,\Lambda_{k-1}\}$ of
$\mathcal{D}_{\hat{\xx}}^{k-2}$ to compute a new element
$\Lambda_{k}$ in
$\mathcal{D}_{\hat{\xx}}^{k-1}/\mathcal{D}_{\hat{\xx}}^{k-2}$,
 the only $n-1$ free parameters are $a_{k,i}$.
 Now we modify Theorem \ref{withtran} under the assumption (\ref{initialization}) to avoid the linear transformations.
 Note that we  also adopt a new equivalent form (\ref{newconstructP}) to construct $\Delta_{k}$ instead of (\ref{constructP}),
 which introduces an efficient method for  evaluating   the differential
 functionals.

\begin{theorem}\label{withouttran}\citep{Linan:2011}
Suppose $\hat{\xx}$ is an isolated breath-one singular root of a
given polynomial system $F=\{f_1,\ldots,f_n\}$ with the
multiplicity $\mux$. Set $\Lambda_1=1$ and
$\Lambda_2=d_1+a_{2,2}d_2+\cdots+a_{2,n}d_n$, then we can
construct $\Lambda_k$ incrementally for $k$ from $3$ by
\begin{equation}\label{newL}
\Lambda_k=\Delta_k+a_{k,2}d_2+a_{k,3}d_3+\cdots+a_{k,n}d_n,
\end{equation}
where
%$\Delta_k$ is a differential functional which has no free parameters and can be
%obtained from previous computed $\{\Lambda_1,\Lambda_2,\ldots,\Lambda_{k-1}\}$ by
\begin{equation}\label{newconstructP}
\Delta_k=\frac{1}{k-1}\left[\frac{\partial}{\partial x_1}\Lambda_{k-1}+\sum_{j=2}^n
\frac{\partial}{\partial
x_j}(a_{2,j}\Lambda_{k-1}+\cdots+(k-2)a_{k-1,j}\Lambda_{2})\right]
\end{equation}
The parameters $a_{k,j}$, for $j=2,\ldots,n$ are determined by solving (\ref{kthcoeff}),
%\begin{equation}\label{stillkthcoeff}
%\mbox{Jacobian}(\hat{\xx})_{:,2..n}\cdot\left[\begin{array}{c}
%  a_{k,2}  \\
%  \vdots   \\
%  a_{k,n}
%\end{array}
%\right]=-\left[\begin{array}{c}
%  \Delta_k(f_1)  \\
%  \vdots   \\
%  \Delta_k(f_n)
%\end{array}
%\right]
%\end{equation}
When $k=\mu+1$, there is no solution for (\ref{kthcoeff}) and the
process will be stopped.  The set
$\{\Lambda_1,\Lambda_2,\ldots,\Lambda_{\mux}\}$ is a closed basis of
the local dual space $\mathcal{D}_{\hat{\xx}}$.
\end{theorem}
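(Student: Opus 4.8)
The plan is to mirror the incremental construction of Theorem~\ref{withtran}, started now from the general normalized null vector $\aaa_2$ of~(\ref{a2}) instead of from $d_1$, and then to check that the closed form~(\ref{newconstructP}) produces exactly the functional $\Delta_k$ that the reconstruction formula~(\ref{constructL}) would give, so that it may legitimately replace~(\ref{constructP}). For the reduction part, note that Lemmas~\ref{reduction} and~\ref{freepara} and Remark~\ref{normalized1} were proved under the single hypothesis~(\ref{initialization}), not under the stronger assumption that the first column of $\JF(\hat{\xx})$ vanishes. Hence, given a closed basis $\{\Lambda_1,\dots,\Lambda_{k-1}\}$ of $\mathcal{D}_{\hat{\xx}}^{k-2}$ normalized and reduced as in Remark~\ref{normalized1}, every $\Lambda_k\in\mathcal{D}_{\hat{\xx}}^{k-1}\setminus\mathcal{D}_{\hat{\xx}}^{k-2}$ may, after the same normalization and reduction, be chosen so that $\Phi_{x_1}(\Lambda_k)=\Lambda_{k-1}$ and, by~(\ref{phix1}), $\Phi_{x_i}(\Lambda_k)=a_{k,i}\Lambda_1+a_{k-1,i}\Lambda_2+\cdots+a_{2,i}\Lambda_{k-1}$ for $i=2,\dots,n$, leaving only $a_{k,2},\dots,a_{k,n}$ undetermined. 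Feeding these coefficients into~(\ref{constructL}) and peeling off the contributions $\sum_{i=2}^{n}a_{k,i}\Psi_{x_i}(\Lambda_1)=\sum_{i=2}^{n}a_{k,i}d_i$ yields the decomposition~(\ref{newL}) with a $\Delta_k$ that depends only on $\{\Lambda_1,\dots,\Lambda_{k-1}\}$, has leading term $d_1^{k-1}$, has no constant term, and contains none of $d_1,d_1^2,\dots,d_1^{k-2},d_2,\dots,d_n$; up to here the statement coincides with Theorem~\ref{withtran} for the more general $\Lambda_2$.

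The computational heart is then to show that the right-hand side of~(\ref{newconstructP}) equals this $\Delta_k$. Here $\tfrac{\partial}{\partial x_i}$ applied to a differential functional is read as right composition with $\tfrac{\partial}{\partial x_i}$, i.e.\ the linear map with $\tfrac{\partial}{\partial x_i}\mathbf{d}^{(\alpha_1,\dots,\alpha_n)}=(\alpha_i+1)\,\mathbf{d}^{(\alpha_1,\dots,\alpha_i+1,\dots,\alpha_n)}$, so in particular $\tfrac{\partial}{\partial x_i}(1)=d_i$. From the definitions one records the commutation rules
\[
\Phi_{x_j}\,\tfrac{\partial}{\partial x_i}=\tfrac{\partial}{\partial x_i}\,\Phi_{x_j}\quad(j\neq i),
\qquad
\Phi_{x_i}\,\tfrac{\partial}{\partial x_i}=\mathrm{id}+\tfrac{\partial}{\partial x_i}\,\Phi_{x_i}.
\]
Then one proceeds by induction on $k$: assuming the formula for $\Delta_{k-1}$ together with $\Lambda_{k-1}=\Delta_{k-1}+\sum_{j=2}^{n}a_{k-1,j}d_j$, apply $\Phi_{x_1}$ to $(k-1)$ times the right-hand side of~(\ref{newconstructP}); pulling the term of index $l=k-1$ out of each inner sum and using the commutation rules and $\Phi_{x_1}(\Lambda_m)=\Lambda_{m-1}$, it collapses to $\Lambda_{k-1}+(k-2)\Delta_{k-1}+(k-2)\sum_{j=2}^{n}a_{k-1,j}d_j=(k-1)\Lambda_{k-1}$, so $\Phi_{x_1}(\Delta_k)=\Lambda_{k-1}$. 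An analogous calculation with $\Phi_{x_i}$, $i\ge2$, gives $\Phi_{x_i}(\Delta_k)=a_{k-1,i}\Lambda_2+\cdots+a_{2,i}\Lambda_{k-1}$, and the right-hand side of~(\ref{newconstructP}) plainly has no constant term. Since by~(\ref{constructL}) a functional without constant term is uniquely determined by its images under $\Phi_{x_1},\dots,\Phi_{x_n}$, it follows that the $\Delta_k$ of~(\ref{newconstructP}) coincides with the one of the first part. I expect the bookkeeping here --- the repeated application of the commutation rules together with the index shifts $\Lambda_j\leftrightarrow\Lambda_{k+1-j}$ and the extra weights $1,2,\dots,k-2$ inside the sums --- to be the main obstacle.

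Finally, with $\Lambda_k=\Delta_k+\sum_{i=2}^{n}a_{k,i}d_i$ at hand, Lemma~\ref{coeff} says $\Lambda_k\in\mathcal{D}_{\hat{\xx}}^{k-1}$ iff, besides the stability relations which hold by construction, $\Lambda_k(f_l)=0$ for $l=1,\dots,n$. Since $d_i(f_l)=\tfrac{\partial f_l}{\partial x_i}(\hat{\xx})$ is the $(l,i-1)$ entry of $\SJF(\hat{\xx})$, writing $\Lambda_k(f_l)=\Delta_k(f_l)+\sum_{i=2}^{n}a_{k,i}d_i(f_l)=0$ for $l=1,\dots,n$ is precisely the linear system~(\ref{kthcoeff}). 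The breadth-one dimension property gives $\dim\mathcal{D}_{\hat{\xx}}^{t}-\dim\mathcal{D}_{\hat{\xx}}^{t-1}=1$ for $1\le t\le\mux-1$ and $0$ for $t\ge\mux$; hence~(\ref{kthcoeff}) is consistent and yields a genuinely new basis element for every $k\le\mux$, while for $k=\mux+1$ it is inconsistent and the construction stops, leaving $\{\Lambda_1,\dots,\Lambda_{\mux}\}$ a closed basis of $\mathcal{D}_{\hat{\xx}}$.
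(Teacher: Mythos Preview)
Your proof is correct and follows the same high-level architecture as the paper: establish the stability property~(\ref{closed}) for the constructed $\Lambda_k$ via Lemma~\ref{freepara}, observe that solving~(\ref{kthcoeff}) forces $\Lambda_k(f_i)=0$, and conclude by Lemma~\ref{coeff}. The termination argument at $k=\mux+1$ via the breadth-one dimension count is likewise the same.

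Where you go beyond the paper is in actually \emph{verifying} that the closed form~(\ref{newconstructP}) for $\Delta_k$ agrees with what the reconstruction formula~(\ref{constructL}) together with the coefficients from~(\ref{phix1}) would produce. The paper's proof simply cites Lemma~\ref{freepara} and formulas~(\ref{constructL}), (\ref{newL}), (\ref{newconstructP}) together and asserts that the resulting $\Lambda_k$ satisfies stability, without checking that the $\partial/\partial x_i$-based expression~(\ref{newconstructP}) matches the $\Psi_{x_i}$-based one. Your technique for filling this gap --- record the commutation relations $\Phi_{x_j}\,\partial_{x_i}=\partial_{x_i}\,\Phi_{x_j}$ ($j\neq i$) and $\Phi_{x_i}\,\partial_{x_i}=\mathrm{id}+\partial_{x_i}\,\Phi_{x_i}$, then show by induction that the right-hand side of~(\ref{newconstructP}) has exactly the $\Phi_{x_i}$-images predicted by~(\ref{phix1}), and invoke uniqueness of a constant-free functional from its $\Phi$-images --- is a clean and genuinely different device that the paper does not make explicit. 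It buys you a self-contained justification of the ``new equivalent form'' that the paper only advertises.
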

\begin{proof}
According to Lemma \ref{freepara} and
(\ref{constructL},\ref{newL},\ref{newconstructP}), the constructed
$\Lambda_k$ satisfy the \emph{stability property} (\ref{closed}).
Moreover, solving (\ref{kthcoeff}) will guarantee that
$\Lambda_k(f_i)=0$, for $i=1,\ldots,n$. Therefore, by Lemma
\ref{coeff}, the set $\{\Lambda_1,\Lambda_2,\ldots,\Lambda_{\mux}\}$
is a closed basis of the local dual space $\mathcal{D}_{\hat{\xx}}$.
\end{proof}
\begin{remark}
As showed in \citep{LiZhi:2009}, for solving (\ref{kthcoeff}),
only the vector on the right side is updated, while the matrix of
the size $n\times(n-1)$ on the left side is fixed. So we  apply
the LU decomposition to $\SJF(\hat{\xx})$, then solve two
triangular systems instead of (\ref{kthcoeff}).
\end{remark}

Now we consider Example \ref{ex1} again. Since $\JF(1,2)$ has a
non-trivial null vector $\mathbf{r}=(-\frac{1}{2},1)^T$, we perform
a change of variables $x_1\leftrightarrow x_2$, then apply the
method described in Theorem \ref{withouttran} for computing a closed
basis of the local dual space of $F$ at $(1,2)$. We derive that
\[\Lambda_1=1,\Lambda_2=-\frac{1}{2}d_1+d_2,\Lambda_3=\frac{1}{4}d_1^2-\frac{1}{2}d_1d_2+d_2^2-\frac{1}{8}d_1.\]

In \citep{LiZhi:2009}, in order to compute $\Lambda_{k}$ of
$\mathcal{D}_{\hat{\xx}}^{k-1}/\mathcal{D}_{\hat{\xx}}^{k-2}$, we need to
construct $\Delta_{k}$ by (\ref{constructP}) and evaluate
$\Delta_{k}(f_i)$, for $i=1,\ldots,n$. Even if the input system
$F$ is sparse, the differential functional $\Delta_{k}$  could
still be very dense. Hence, the evaluation of the vector on the
right side of (\ref{kthcoeff}) could be very expensive sometimes.
\begin{example}\label{ex2}
Consider a polynomial  system $F=\{f_1,\ldots,f_s\}$
\begin{eqnarray*}
f_i&=&x_i^3+x_i^2-x_{i+1}, ~\mbox{if}~i<s,\\
f_s&=&x_s^2,
\end{eqnarray*}
with a breath-one singular zero $(0,\ldots,0)$ of the multiplicity
$2^s$.
\end{example}

As shown in \citep{LiZhi:2009}, for $s=6$, about $17$MB of memory
is used to store the local dual bases and it takes about $3$ hours
to compute all of them.  Moreover, for $s=7$, we are not able to
obtain all $\Lambda_k$ in $2$ days, and for $s=9$, the estimated
store space is about $1$GB. It is not a surprise that the
computation is dominated by the evaluation of $\Delta_{k}(F)$ in
(\ref{kthcoeff}).

In fact, we can view $\Delta_{k}$ and $\Lambda_k$ as differentiation
operators, denoted by $P_k$ and $L_k$ respectively
\citep{LiZhi:2009}, then we can take advantage of (\ref{newL}) and
(\ref{newconstructP}) to construct the polynomial systems $P_{k}(F)$
and $L_k(F)$ by
\begin{equation}\label{LkPk}
P_{k}(F)=\sum_{j=1}^{k-2}\frac{j}{k-1}\cdot
J_{L_{k-j}(F)}\cdot\mathbf{a}_{j+1}\mbox{ and
}L_{k}(F)=P_{k}(F)+J_{F}\cdot\mathbf{a}_k,
\end{equation}
where $J_{L_{j}(F)}$ is the Jacobian matrix of $L_{j}(F)$,
$\mathbf{a}_2=[1,a_{2,2},\ldots,a_{2,n}]^T$ and
$\mathbf{a}_j=[0,a_{j,2},\ldots,a_{j,n}]^T$ for $j=3,\ldots,k-1$.
Hence, we can compute the evaluation of $P_k(F)$ which is equal to
$\Delta_k(F)$ without constructing and storing the dense
differential functionals   $\Delta_{k}$.

The routine \verb"MSB1"  below takes an ideal
$I=(f_1,f_2,\ldots,f_n)\subset R$ and an isolated root
$\hat{\xx}\in\mathbb{K}^n$ of $I$ as input, where the corank of the
Jacobian matrix $\JF(\hat{\xx})$ is one, and returns the
multiplicity $\mux$ and a closed basis of the local dual space
$\mathcal{D}_{\hat{\xx}}$. Besides, we take
$\mathbf{a}_2,\mathbf{a}_3,\ldots,\mathbf{a}_{\mux}$ as output too,
since one can construct all $\Lambda_k$ by (\ref{newL}) and
(\ref{newconstructP}) immediately after they are computed. Another
reason for outputting $\mathbf{a}_i$'s is that these values are
important for multiple root refinement and verification if
$\hat{\xx}$ is given with limited precision, which will be discussed
in the next section.

\begin{algorithm}\label{MSB1}MSB1

\noindent \textbf{Input:} A polynomial system
$F=\{f_1,f_2,\ldots,f_n\}$ and a root $\hat{\xx}\in\mathbb{K}^n$.

\noindent \textbf{Output:} The multiplicity $\mu$, the parameters
$\mathbf{a}_2,\mathbf{a}_3,\ldots,\mathbf{a}_{\mux}$  and a closed
basis $\{\Lambda_1,\Lambda_2,\ldots,\Lambda_{\mux}\}$ of the local
dual space  $\mathcal{D}_{\hat{\xx}}$.

\begin{enumerate}

\item \label{ss1} Compute a non-trivial null vector
$\mathbf{r}=[r_1,r_2,\ldots,r_n]^T$ of $J_F(\hat{\xx})$.  Let $t$
be the integer s.t. $|r_{t}|\geq |r_{j}|$, $1\leq j\leq n$.

 Apply
variables exchange $x_1\leftrightarrow x_t$ to $F$, $\hat{\xx}$,
$J_F$ and $\mathbf{r}$. Set
    \[\mathbf{a}_2:=\left[1,\frac{r_2}{r_1},\frac{r_3}{r_1},\ldots,\frac{r_n}{r_1}\right]^T,
    L_2(F):=J_F\cdot\mathbf{a}_2\mbox{ and }P_3:=\frac{1}{2}J_{L_2(F)}\cdot\mathbf{a}_2.\]
    Compute the LU Decomposition of $\SJF(\hat{\xx})=P\cdot L\cdot U$. Set $k:=3$.

\item \label{ss2} Compute $P_k(F)$ by (\ref{LkPk}) and evaluate it
at $\hat \xx$ to get $\Delta_k(F)$, and
solve \[L\cdot\mathbf{b}=-P^{-1}\cdot\Delta_k(F). \]% ~{\text{where}}~ \mathbf{b}=(b_1,b_2,\ldots,b_n)^T.\]
 If the last entry in $\bb$ is zero, solve  $U_{1..(n-1),:}\cdot\mathbf{c}=\mathbf{b}$,
%where $\mathbf{c}=(c_1,c_2,\ldots,c_{n-1})^T$,
 and set
    \[\mathbf{a}_k:=\left[\begin{array}{l}
    0\\
    \mathbf{c}
    \end{array}\right]
    %c_1,c_2,\ldots,c_{n-1})^T
    ~\mbox{ and } L_k(F):=P_k(F)+J_F\cdot\mathbf{a}_k,\]
    and repeat with  $k:=k+1$. Otherwise, set  $\mux:=k-1$, go to Step \ref{ss4}.

%\item \label{s3} Set  and
%    \[\Delta_{k}(F):=\frac{1}{k-1}\left(\Lambda_{k-1}(F),2\Lambda_{k-2}(F),\ldots,(k-2)\Lambda_2(F)\right)
%    \left[\begin{array}{c}\mathbf{a}_2^T\\ \mathbf{a}_3^T\\ \vdots\\ \mathbf{a}_{k-1}^T\end{array}\right]
%    \left(\begin{array}{c}\frac{\partial}{\partial x_1}\\ \frac{\partial}{\partial x_2}\\ \vdots
%    \\ \frac{\partial}{\partial x_n}\end{array}\right).\]
%     Go back to Step \ref{s2}.

\item \label{ss4} Construct
$\{\Lambda_1,\Lambda_2,\ldots,\Lambda_{\mu}\}$ by (\ref{newL}) and
(\ref{newconstructP}) using   $\mathbf{a}_2$,$\mathbf{a}_3$,
$\ldots,\mathbf{a}_{\mux}$ incrementally, %starting from
%    \[\Lambda_1=1\mbox{ and }\Lambda_2=d_1+a_{2,2}d_2+\cdots+a_{2,n}d_n,\]
    then perform variables exchange $d_1\leftrightarrow d_t$.

%\item Return $\mux:=k-1$ and
%$\{\Lambda_1,\Lambda_2,\ldots,\Lambda_{\mux}\}$.

\end{enumerate}

\end{algorithm}

It should be noticed that  we run  Step \ref{ss4} in Algorithm
\ref{MSB1} only  when  a closed basis for the local dual space is
wanted. In general,  we can omit this step in refining and
certifying approximate singular solutions.  We have implemented this
method in Maple. In the following table, we show the time needed for
computing all $\mathbf{a}_j$ in Example \ref{ex2}, for
$j=2,\ldots,2^s$.
\begin{table}[ht]
\begin{center}
\begin{tabular}{|c|c|c|c|c|c|} \hline
s & 6 & 7 & 8 & 9 & 10\\ \hline
multiplicity & 64 & 128 & 256 & 512 & 1024\\ \hline
time(sec.) & 0.593 & 1.377 & 3.445 & 10.913 & 44.659\\ \hline\end{tabular}
%{\label{table1}Algorithm Performance}
\end{center}
\vspace{-0.4cm}
\end{table}

\section{Verified Multiple Roots of Polynomial Systems}
\label{verify} As mentioned in \citep{MM:2011}, in real-life
applications it is common to work with approximate inputs, and
usually we need to  decide numerically whether an approximate system
possesses a unique real or complex root in a given domain.

Standard verification methods for nonlinear systems are based on
the following theorem \citep{Krawczyk:1969, Moore:1977,
Rump:1983}.

\begin{theorem}\label{verification}
Let $F\in\mathbb{R}^n$ be a polynomial system with
$F=(f_1,\ldots,f_n)$, and $\tilde{\xx}\in\mathbb{R}^n$ a real
point. Given an interval domain $X\in\mathbb{IR}^n$ with
$\tilde{\xx}\in X$, and an interval matrix
$M\in\mathbb{IR}^{n\times n}$ satisfies $\nabla f_i(X)\subseteq
M_{i,:}$, for $i=1,\ldots,n$. Denote by $I$ the $n\times n$
identity matrix and assume
\begin{equation*}
-J_F(\tilde{\xx})F(\tilde{\xx})+(I-J_F(\tilde{\xx})M)X\subseteq
int(X).
\end{equation*}
Then there is a unique $\hat{\xx}\in X$ with $F(\hat{\xx})=0$.
Moreover, every matrix $\tilde{M}\in M$ is nonsingular. In
particular, the Jacobian matrix $J_F(\hat{\xx})$ is nonsingular.
\end{theorem}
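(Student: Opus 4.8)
The plan is to recognize Theorem~\ref{verification} as the Krawczyk-type inclusion test and to prove it in three moves: a componentwise mean value reformulation of $F$ on the box $X$, an elementary argument that every matrix in $M$ is nonsingular, and a Brouwer fixed-point argument for the existence and uniqueness of the zero. Write $R:=J_F(\tilde{\xx})$ for the point matrix occurring in the hypothesis. The first thing I would record is that $R\in M$: since $\tilde{\xx}\in X$ we have $\nabla f_i(\tilde{\xx})\in\nabla f_i(X)\subseteq M_{i,:}$ for each $i$, so the matrix whose rows are the $\nabla f_i(\tilde{\xx})$ belongs to $M$. After translating coordinates I may assume $\tilde{\xx}=\mathbf{0}$ (and, as is usual for verification boxes, that $X$ is symmetric, $X=-X$), so the hypothesis reads $-RF(\mathbf{0})+(I-RM)X\subseteq\mathrm{int}(X)$. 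The basic tool is the componentwise mean value theorem: for $\xx,\yy\in X$ and each $i$ there is a point $\xi_i$ on the segment $[\xx,\yy]\subseteq X$ with $f_i(\yy)-f_i(\xx)=\nabla f_i(\xi_i)(\yy-\xx)$, and since $\nabla f_i(\xi_i)\in M_{i,:}$, stacking these rows produces a matrix $\widetilde{M}\in M$, depending on $\xx$ and $\yy$, with $F(\yy)-F(\xx)=\widetilde{M}(\yy-\xx)$.

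Next I would prove that every $M'\in M$ is nonsingular, and I would do this \emph{before} exhibiting any zero of $F$, in order to sidestep the circularity that passing from $RF(\hat{\xx})=\mathbf{0}$ to $F(\hat{\xx})=\mathbf{0}$ needs $R$ to be known invertible. Set $C:=I-RM$ and $Z:=-RF(\mathbf{0})$, so the hypothesis is $Z+CX\subseteq\mathrm{int}(X)$. For any realization $C'\in C$ and any $\xx\in X$, both $Z+C'\xx$ and $Z-C'\xx$ (using $-\xx\in X$) lie in the convex symmetric set $\mathrm{int}(X)$, hence so does their half-difference $C'\xx$; thus $C'\xx\in\mathrm{int}(X)$ for all $C'\in C$ and all $\xx\in X$. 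If some $M'\in M$ made $RM'$ singular, I would pick $\vv\ne\mathbf{0}$ with $RM'\vv=\mathbf{0}$ and rescale it so that $\vv$ lies on the boundary $\partial X$; then $\vv=(I-RM')\vv\in\mathrm{int}(X)$ because $I-RM'\in C$, contradicting $\vv\in\partial X$. Hence $RM'$ is nonsingular for every $M'\in M$; taking $M':=R\in M$ shows that $R\cdot R$, and therefore $R$, is nonsingular, so $M'=R^{-1}(RM')$ is nonsingular for every $M'\in M$. In particular $J_F(\tilde{\xx})$ is nonsingular, and once a zero $\hat{\xx}\in X$ is produced, $J_F(\hat{\xx})\in M$ is nonsingular as well.

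With $R$ now known invertible, existence follows from Brouwer's theorem applied to the continuous self-map implicit in the hypothesis: put $g(\xx):=\xx-RF(\xx)$. Using the mean value reformulation between $\mathbf{0}$ and $\xx$, $g(\xx)=-RF(\mathbf{0})+(I-R\widetilde{M}_\xx)\xx$, which for $\xx\in X$ lies in $-RF(\mathbf{0})+(I-RM)X\subseteq\mathrm{int}(X)\subseteq X$; so $g$ maps the nonempty compact convex box $X$ into itself and has a fixed point $\hat{\xx}\in X$, whence $RF(\hat{\xx})=\mathbf{0}$ and, $R$ being invertible, $F(\hat{\xx})=\mathbf{0}$. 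For uniqueness, if $\hat{\xx},\hat{\yy}\in X$ are both zeros of $F$, the mean value reformulation gives $\widetilde{M}\in M$ with $\widetilde{M}(\hat{\yy}-\hat{\xx})=F(\hat{\yy})-F(\hat{\xx})=\mathbf{0}$, and nonsingularity of $\widetilde{M}$ forces $\hat{\yy}=\hat{\xx}$.

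The step I expect to be the main obstacle is the nonsingularity of $M$: it must be extracted from the strict inclusion alone, and it has to be established \emph{first}, to break the mutual dependence between ``$R$ invertible'' and ``$M$ nonsingular''. The elementary half-difference argument above uses that $X$ is symmetric about $\tilde{\xx}$; for a box that merely contains $\tilde{\xx}$ one instead deduces from $Z+CX\subsetneq X$ that the comparison matrix satisfies $\rho(|I-RM|)<1$, i.e.\ that the interval matrix $RM$ is strongly regular, which again makes every realization nonsingular. The remaining ingredients---the componentwise mean value theorem (where the intermediate point $\xi_i$ genuinely depends on $i$, there being no mean value theorem for vector-valued maps), Brouwer's theorem, and the uniqueness computation---are routine once the nonsingularity is in hand.
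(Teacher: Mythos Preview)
The paper does not prove Theorem~\ref{verification}; it is quoted as a known verification criterion with citations to Krawczyk, Moore, and Rump, and is used as a black box in Section~\ref{verify}. There is therefore no ``paper's own proof'' to compare against.

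On its own merits your argument is a correct and standard proof of the Krawczyk--Moore inclusion test: the componentwise mean value reformulation, the Brouwer fixed-point step for existence, and the uniqueness computation via nonsingularity of every realization in $M$ are exactly the classical moves. Your decision to establish nonsingularity of $R=J_F(\tilde{\xx})$ \emph{before} producing a zero is the right way to avoid circularity, and the half-difference trick showing $C'\xx\in\mathrm{int}(X)$ is clean.

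Two small points worth noting. First, the symmetry assumption $X=-X$ after translation is not part of the theorem statement; you correctly flag this and mention the $\rho(|I-RM|)<1$ route for the general case, but as written your nonsingularity argument depends on it, so in a fully rigorous write-up you would need to carry out that alternative. Second, the theorem as stated in the paper uses $J_F(\tilde{\xx})$ where the standard Krawczyk test uses an arbitrary preconditioner $R$ (typically $\approx J_F(\tilde{\xx})^{-1}$); you follow the paper's notation, which is fine for the purpose at hand, but the proof goes through verbatim for any fixed real matrix in place of $J_F(\tilde{\xx})$.
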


In \citep{RuGr09}, they introduced a smoothing parameter to certify
a double root of a slightly perturbed system using Theorem
\ref{verification}. It should be noticed  that  a double root is the
simplest breath-one root with the multiplicity $2$.

In \citep{MM:2011}, they applied Theorem \ref{verification} to a
deflated system to verify a multiple root of a nearby system with
the computed local dual structure. Their method can deal with
arbitrary multiple roots.

For the breath one case and $\mu>2$, in \citep[Theorem
4.2]{RuGr09}, they proved that  it is impossible to compute an
inclusion of a multiple root by adding only a smoothing parameter
to one selected equation. We show below how to construct a
deflated  system  using the parameterized basis in
 $\mathbf{a}_2, \ldots, \mathbf{a}_{\mu}$ for the local dual space of
$\mathcal{D}_{\hat{\xx}}$ to certify breath-one multiple roots for
$\mu \geq 2$.
%
%
% as we
%notice from the former section that,  once the variable to apply
%the normalization and reduction is decided, the form of a basis
%$\{\Lambda_1,\Lambda_2,\ldots,\Lambda_{\mux}\}$ of
%$\mathcal{D}_{\hat{\xx}}$ is fixed, if we regard $\mathbf{a}_k$,
%$k=2,\ldots,\mux$ as unknowns. In other words, it means that we
%have a parameterized basis, besides, we can use this basis to
%construct deflated systems and certify multiple roots.

Let $F=\{f_1,\ldots,f_n\}\in R$ be given.
Suppose $\hat{\xx}\in \mathbb{K}^n$ is an isolated root of $F$ with the
multiplicity $\mux$ and $J_F(\hat \xx)$ has
corank one.  We show first  how to choose  a pair of suitable variable and
equation to perform the perturbation. In fact, as we showed
before, the perturbed variable $x_i$ can be determined by choosing
a column of $J_F(\hat{\xx})$, which can be written as a linear
combination of the other $n-1$ columns.  Similarly, suppose the
$j$-th row of $J_F(\hat{\xx})$ can be written as a linear
combination of the other $n-1$ linearly independent rows,
then we add  the perturbed univariate polynomial in $x_i$ to $f_j$.
 Then we perform
\begin{equation}\label{exchangexifj}
x_1\leftrightarrow x_i\mbox{ and }f_1\leftrightarrow f_j
\end{equation}
 to
construct the deflated system in (\ref{deflation}).

\begin{assumption}\label{choosexifj}
Suppose $J_F(\hat \xx)$ has corank one.  We  assume below that  the
first row (column) of $J_F(\hat \xx)$ can be written as a linear
combination of its other rows (columns). This can always be achieved
by changing of variables and renumbering equations as above.
\end{assumption}

%
%
%We assume the system has been normalized and reduced to a system
%satisfies the assumption (\ref{initialization}).
%
%
%We suppose $x_1$ is the variable to apply the normalization and
%reduction, then

We introduce $\mux-1$ smoothing parameters
$b_0,b_1,\ldots,b_{\mux-2}$ and construct a deflated system
$G(\mathbf{x},\mathbf{b},\mathbf{a})$ with  $\mu n$  variables and
$\mu n$
 equations:
\begin{equation}\label{deflation}
G(\mathbf{x},\mathbf{b},\mathbf{a})=\left(
\begin{array}{c}
  F_1(\mathbf{x},\mathbf{b})=F(\mathbf{x})-\left(\sum_{\nu=0}^{\mu-2} \frac{b_{\nu}x_1^{\nu}}{\nu!}\right)\mathbf{e}_1   \\
  F_2(\mathbf{x},\mathbf{b},\mathbf{a}_2)   \\
  F_3(\mathbf{x},\mathbf{b},\mathbf{a}_2,\mathbf{a}_3)  \\
  \vdots  \\
  F_{\mux}(\mathbf{x},\mathbf{b},\mathbf{a}_2,\ldots,\mathbf{a}_{\mux})
\end{array}
\right),
\end{equation}
where $\mathbf{b}=[b_0,b_1,\ldots,b_{\mu-2}]$,
$\mathbf{a}=[\mathbf{a}_2,\mathbf{a}_3,\ldots,\mathbf{a}_{\mux}]$,
$\mathbf{a}_2=[1,a_{2,2},\ldots,a_{2,n}]^T$,
$\mathbf{a}_k=[0,a_{k,2},\ldots,a_{k,n}]^T$ for $2 <k \leq \mu$, and
\begin{equation}\label{Fk}
F_{k}(\mathbf{x},\mathbf{b},\mathbf{a}_2,\ldots,\mathbf{a}_{k})=L_{k}(F_1).
\end{equation}

%Suppose $G(\hat{\xx},\hat{\bb},\hat{\aaa})=0$,  %is a root of
%%$G(\mathbf{x},\mathbf{b},\mathbf{a})=0$,
% we have the following
%theorem.

\begin{theorem}\label{nonsingular}
Suppose $G(\hat{\xx},\hat{\bb},\hat{\aaa})=0$. Under Assumption
\ref{choosexifj}, if the Jacobian matrix
$J_G(\hat{\xx},\hat{\bb},\hat{\aaa})$ is nonsingular, then
$\hat{\xx}$ is an isolated root of the polynomial
$F_0(\mathbf{x})=F_1(\mathbf{x},\hat{\bb})$ with the multiplicity
$\mux$ and the corank of $J_{F_0}(\hat \xx)$ is one.

\end{theorem}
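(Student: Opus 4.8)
The plan is to read off the block structure of $J_G$ at $(\hat\xx,\hat\bb,\hat\aaa)$, use it first to fix the corank, and then—by exhibiting an explicit kernel vector—to show that the incremental construction of Theorem~\ref{withouttran}, applied to $F_0$ at $\hat\xx$, must stop exactly at step $\mux$. Set $A:=J_{F_0}(\hat\xx)$; recall that $\SJ(\hat\xx)$ denotes the submatrix of the last $n-1$ columns of $A$. Ordering the equations of $G$ as $F_1,\dots,F_{\mux}$ and the unknowns as $\xx,\bb,\aaa_2,\dots,\aaa_{\mux}$, I would first establish, from $F_k=L_k(F_1)$, the form of $F_1$, and the recursion (\ref{LkPk}), the following facts evaluated at the solution: (i) $F_1$ does not involve $\aaa$, and $\partial_\bb F_1$ is supported on its first row (the perturbation is a multiple of $\ee_1$); (ii) $F_k$ depends on $\aaa_l$ only for $l\le k$, with $\partial_{\aaa_k}F_k=\SJ(\hat\xx)$, since the $P_k$-part is free of $\aaa_k$; (iii) for $2\le l\le k$, $\partial_{\aaa_l}F_k$ is the submatrix of the last $n-1$ columns of $J_{L_{k+1-l}(F_0)}(\hat\xx)$, and $\partial_\xx F_k=J_{L_k(F_0)}(\hat\xx)$. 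Hence $J_G$ is block lower triangular in the $\aaa$-columns, with $\SJ(\hat\xx)$ repeated on the diagonal, bordered by the $\xx$- and $\bb$-columns and the $F_1$-row.

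For the corank: if $\SJ(\hat\xx)$ had column rank $<n-1$, a nonzero $\vv\in\ker\SJ(\hat\xx)$ placed in the $\aaa_{\mux}$-slot (all other slots zero) would, by (i)--(ii), lie in $\ker J_G$, contradicting nonsingularity; hence $\rank\SJ(\hat\xx)=n-1$. Since $F_2(\hat\xx,\hat\bb,\hat\aaa_2)=A\hat\aaa_2=0$ and $\hat\aaa_2=(1,\hat a_{2,2},\dots)^T\neq0$, this forces $\rank A=n-1$, i.e.\ the corank of $J_{F_0}(\hat\xx)$ is one, $\ker A=\Span\{\hat\aaa_2\}$, and under Assumption~\ref{choosexifj} the left null vector of $A$ has nonzero first coordinate. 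Consequently the construction of Theorem~\ref{withouttran} applies to $F_0$ at $\hat\xx$; the full column rank of $\SJ(\hat\xx)$ makes the parameters at each step unique, and because $G(\hat\xx,\hat\bb,\hat\aaa)=0$ means $\hat\aaa_k$ solves (\ref{kthcoeff}) for $k=2,\dots,\mux$, the construction reproduces exactly $\Lambda_1,\dots,\Lambda_{\mux}$, which lie in $\mathcal D_{\hat\xx}$ by Lemma~\ref{coeff} (stability holds by construction, and $\Lambda_k(f_{0,i})=0$ since $G$ vanishes); in particular it does not stop before step $\mux{+}1$.

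It remains to show it stops at step $\mux{+}1$. Suppose not: (\ref{kthcoeff}) at $k=\mux{+}1$ has a solution $\aaa_{\mux+1}^{0}$, equivalently $L_{\mux+1}(F_0)(\hat\xx)=\Delta_{\mux+1}(F_0)(\hat\xx)+J_{F_0}(\hat\xx)\aaa_{\mux+1}^{0}=0$. Let $\vv$ be the vector whose $\xx$-block is $\hat\aaa_2$, whose $\bb$-block is $0$, whose $\aaa_k$-block is $k$ times the free part of $\hat\aaa_{k+1}$ for $2\le k\le\mux-1$, and whose $\aaa_{\mux}$-block is $\mux$ times the free part of $\aaa_{\mux+1}^{0}$; then $\vv\neq0$. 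I would verify $J_G\vv=0$ row-block by row-block: the $F_1$-row gives $A\hat\aaa_2=0$; for the $F_k$-row ($2\le k\le\mux$) the $\bb$-term drops, and—using (iii) and the fact that $\aaa_{l+1}$ has zero first component for $l\ge2$, so a last-$(n-1)$-columns block applied to a free part equals the full Jacobian applied to the full vector—the row collapses to $\sum_{l=1}^{k}l\,J_{L_{k+1-l}(F_0)}(\hat\xx)\,\hat\aaa_{l+1}$, which by (\ref{LkPk}) equals $k\,L_{k+1}(F_0)(\hat\xx)=k\bigl(\Lambda_{k+1}(f_{0,1}),\dots,\Lambda_{k+1}(f_{0,n})\bigr)^T$; this is $0$ for $k{+}1\le\mux$ because the equation $F_{k+1}$ of $G$ vanishes at the point, and $0$ for $k=\mux$ by the supposition. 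So $J_G\vv=0$, a contradiction. Hence (\ref{kthcoeff}) at $k=\mux{+}1$ has no solution, so by Lemma~\ref{coeff} and the breadth-one structure $\mathcal D^{\mux}_{\hat\xx}=\mathcal D^{\mux-1}_{\hat\xx}=\Span\{\Lambda_1,\dots,\Lambda_{\mux}\}$; since the dual space stabilizes once two consecutive truncations coincide, $\mathcal D_{\hat\xx}$ is this $\mux$-dimensional space, i.e.\ $\hat\xx$ is an isolated root of $F_0$ of multiplicity $\mux$, with the corank of $J_{F_0}(\hat\xx)$ equal to one by the previous step.

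The main obstacle I anticipate is item (iii): identifying the blocks $\partial_{\aaa_l}F_k$ of $J_G$ at the solution, because each $L_k$ depends recursively on $\aaa_2,\dots,\aaa_k$, so one must induct on $k$ through (\ref{LkPk}), carefully tracking the rational coefficients (which collapse, as one sees already in the $\mux=3,4$ instances). Once (iii) is in hand, the corank step and the kernel-vector computation are short; a secondary point needing care is the justification of the free-part/full-Jacobian replacement in the $F_k$-rows and of the role of Assumption~\ref{choosexifj} in securing a nonzero first coordinate for the left null vector of $A$.
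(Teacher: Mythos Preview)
Your proposal is correct and follows essentially the same route as the paper. Both arguments hinge on the block structure of $J_G$: your item~(iii) is precisely the paper's identity $\partial F_k/\partial a_{i,j}=\partial F_{k-i+1}/\partial x_j$ (equation~(\ref{equivaij})), from which the block-triangular form with repeated $\SJ(\hat\xx)$ on the diagonal follows; the corank argument (placing a null vector of $\SJ$ in the last $\aaa$-slot) is identical; and your kernel vector for the stopping step is the natural extension of the paper's vector~$\vv$ to include a $\mux\,\aaa_{\mux+1}^{0}$ entry in the $\aaa_{\mux}$-slot, turning the paper's ``$J_G\vv=(0,\ldots,0,\Delta_{\mux+1}(F_0))$ plus a rank argument'' into a direct $J_G\vv=0$ contradiction. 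Your reduction of the $F_k$-row to $k\,L_{k+1}(F_0)(\hat\xx)$ via (\ref{LkPk}) is exactly the computation underlying the paper's claim. The one place to be careful is your aside that Assumption~\ref{choosexifj} forces a nonzero first coordinate for the \emph{left} null vector of $A=J_{F_0}(\hat\xx)$: the assumption is stated for $J_F$, not $J_{F_0}$, and in any case what Theorem~\ref{withouttran} actually requires is only the \emph{right}-kernel normalization $\hat\aaa_2=(1,\ldots)^T$, which you already have from the structure of $G$; the left-null-vector remark is not needed for the argument and should be dropped or rephrased.
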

\begin{proof}
From $G(\hat{\xx},\hat{\bb},\hat{\aaa})=0$,  %is a root of
%$G(\mathbf{x},\mathbf{b},\mathbf{a})=0$,
we have $F_0(\hat{\xx})=0$ and
\begin{equation*}
F_2(\hat{\xx},\hat{\bb},\hat{\aaa}_2)=J_{F_0}(\hat{\xx})\cdot\hat{\aaa}_2=0.
\end{equation*}
Since $\hat{\aaa}_2\neq 0$, we derive that
\begin{equation*}
\rank(J_{F_0}(\hat{\xx}))\leq n-1.
\end{equation*}
Moreover, from the expression of $\hat{\aaa}_2$, we know that the
first column of $J_{F_0}(\hat{\xx})$ can be written as a linear
combination of the other $n-1$ columns. Therefore,
\[\rank(\SJ(\hat{\xx}))=\rank(J_{F_0}(\hat{\xx}))\leq n-1,\]
where $\SJ(\hat{\xx})$ consists of the last $n-1$ columns of
$J_{F_0}(\hat{\xx})$. Similarly, since
$F_{k}(\hat{\xx},\hat{\bb},\hat{\aaa}_2,\ldots,\hat{\aaa}_{k})=0,$
by Theorem \ref{withouttran} and (\ref{newL}), we derive that
\begin{equation*}
\rank(\Delta_k(F_0),\SJ(\hat{\xx}))=\rank(\SJ(\hat{\xx}))\leq
n-1,\mbox{ for } 2 <k \leq \mu.
\end{equation*}
In order to prove that $\hat{\xx}$ is a breadth-one root of
$F_0(\mathbf{x})=0$ with the multiplicity $\mux$, we need to show
that
\begin{equation}\label{criterionb1}
\rank(\SJ(\hat{\xx}))=n-1\mbox{ and
}\rank(\Delta_{\mu+1}(F_0),\SJ(\hat{\xx}))=n.
\end{equation}

 It is interesting to see  that we can use the equivalent relations
\begin{equation}\label{equivaij}
\frac{\partial F_k}{\partial a_{i,j}}=\frac{\partial
F_{k-i+1}}{\partial x_j},\mbox{ for }1\leq i\leq k\mbox{ and
}1\leq j\leq n,
\end{equation}
to obtain  a simplified  expression of $J_G$%. %We can prove which can be
%The Jacobian matrix of $G$ is
\begin{equation}\label{JG}
J_G=\left(
\begin{array}{cccccccccc}
J_{F_1} & \ee_1 & x_1\ee_1 & \cdots & \frac{x_1^{\mux-2}}{(\mux-2)!}\ee_1 & 0 & 0 & \cdots & 0 & 0 \\
J_{F_2} & 0 & \ee_1 & \cdots & \frac{x_1^{\mux-3}}{(\mux-3)!}\ee_1 & \SJ & 0 & \cdots & 0 & 0 \\
J_{F_3} & 0 & 0 & \cdots & \frac{x_1^{\mux-4}}{(\mux-4)!}\ee_1 & {\tilde J}_{F_2} & \SJ & \cdots & 0 & 0 \\
\vdots & \vdots & \vdots & \ddots & \vdots & \vdots & \vdots & \ddots & \vdots & \vdots\\
J_{F_{\mux-1}} & 0 & 0 & \cdots & \ee_1 & {\tilde J}_{F_{\mux-2}} & {\tilde J}_{F_{\mux-3}} & \cdots & \SJ & 0 \\
J_{F_{\mux}} & 0 & 0 & \cdots & 0 & {\tilde J}_{F_{\mux-1}} & {\tilde J}_{F_{\mux-2}} & \cdots & {\tilde J}_{F_2} & \SJ \\
\end{array}
\right),
\end{equation}
where $J_{F_k}$ denotes the Jacobian matrix of
$F_k(\xx,\bb,\aaa_2,\ldots,\aaa_k)$ with respect to $\xx$ and
${\tilde J}_{F_k}$ consists of the last $n-1$ columns of
$J_{F_k}$, for $2\leq k\leq\mux$.
%easily proved by induction.

 If $\rank(\SJ(\hat{\xx}))\leq n-2$,
there will exist a nontrivial vector in its kernel. Note that
$\SJ(\hat{\xx})$ is the only non-zero element in the last column,
we claim that there will exist a nontrivial vector in the kernel
of $J_G(\hat{\xx},\hat{\bb},\hat{\aaa})$, which is a
contradiction. Then we derive that
\begin{equation*}
\rank(J_{F_0}(\hat{\xx}))=\rank(\Delta_k(F_0),\SJ(\hat{\xx}))=\rank(\SJ(\hat{\xx}))=n-1.
\end{equation*}
On other hand, from (\ref{newconstructP}) and (\ref{Fk}), we
derive that
\[J_G(\hat{\xx},\hat{\bb},\hat{\aaa})_{:,1..(\mux-1)n+1}\cdot\vv=(0,\ldots,0,\Delta_{\mu+1}(F_0))^T,\]
where
\[\vv=\frac{1}{\mux}\cdot(1,\hat{a}_{2,2},\ldots,\hat{a}_{2,n},0,\ldots,0,2\hat{a}_{3,2},\ldots,2\hat{a}_{3,n},\ldots,(\mux-1)\hat{a}_{\mux,2},\ldots,(\mux-1)\hat{a}_{\mux,n})^T.\]
So that, if $\rank(\Delta_{\mux+1}(F_0),\SJ(\hat{\xx}))\leq n-1$,
there will exist a nontrivial vector in the kernel of
$J_G(\hat{\xx},\hat{\bb},\hat{\aaa})$, which is a contradiction.
Hence,  we have
\[\rank(\Delta_{\mux+1}(F_0),\SJ(\hat{\xx}))=n.\]
Therefore, according to Theorem \ref{withouttran}, $\hat{\xx}$ is
an  isolated breath-one singular root of $F_0(\mathbf{x})=0$ with
the multiplicity $\mux$.

\end{proof}

%
% This is mainly caused by the multiplicity is
%greater than two. For our method, if the Jacobian matrix
%$J_F(\hat{\xx})$ is of corank one, we can always find the variable
%and the equation to perform the perturbation, which ensure that a
%inclusion of $\hat{\xx}$ can be computed.

\begin{theorem}\label{otherdirection}
 Suppose $\hat{\xx}$ is an exact
isolated root of $F(\mathbf{x})=0$ with the multiplicity $\mux$ and
the corank of $J_F(\hat \xx)$ is one exactly. Under Assumption
\ref{choosexifj}, we have
\begin{equation}\label{chooseeq}
\rank(\SJF(\hat{\xx}),\ee_1)=n.
\end{equation}
Evaluating (\ref{deflation}) at  $\hat{\bb}=\mathbf{0}$, i.e., no
perturbations for $F$,  the Jacobian matrix
$J_G(\hat{\xx},\mathbf{0},\hat{\aaa})$  is
nonsingular. % where $\hat{\bb}=\mathbf{0}$.
\end{theorem}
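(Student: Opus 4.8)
The plan is to prove the two assertions of Theorem~\ref{otherdirection} in turn, using the block structure of $J_G$ displayed in~(\ref{JG}) and the breadth-one characterization recalled in Theorem~\ref{withouttran}. For the first assertion~(\ref{chooseeq}), I would argue as follows. Since the corank of $J_F(\hat\xx)$ is exactly one, by Assumption~\ref{choosexifj} the first column of $J_F(\hat\xx)$ is a linear combination of the remaining $n-1$ columns, so those last $n-1$ columns---i.e. the columns of $\SJF(\hat\xx)$---are linearly independent and $\rank(\SJF(\hat\xx))=n-1$. It remains to check that $\ee_1$ is not in the column span of $\SJF(\hat\xx)$. This is precisely where $\mux\geq 2$ enters: if $\ee_1$ were in that span, then $\ee_1$ would be in the column span of $J_F(\hat\xx)$ as well, which by the stability/dual-space criterion of Theorem~\ref{withouttran} (the condition $\rank(\Delta_{2}(F),\SJF(\hat\xx))=n$ would fail, forcing $\mux=1$) would contradict $\mux\geq2$. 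More directly: $\ee_1\in\mathrm{colspan}(\SJF(\hat\xx))$ would mean there is a polynomial combination producing the functional $d_1$ as the only surviving linear dual element obstruction, which is incompatible with $\hat\xx$ being a singular (multiplicity $\geq2$) root whose Jacobian has a null vector with nonzero first coordinate. Hence $\rank(\SJF(\hat\xx),\ee_1)=n$.

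For the second assertion, I would show $J_G(\hat\xx,\mathbf{0},\hat\aaa)$ is nonsingular by exhibiting that its columns are linearly independent, exploiting the nearly block-triangular shape of~(\ref{JG}). Suppose a vector in the kernel, partitioned conformally as $(\vv_x,\vv_b,\vv_{a_2},\ldots,\vv_{a_\mux})$. Reading the block rows from the bottom up: the last block row forces $\SJ(\hat\xx)\vv_{a_\mux}$ to be determined by lower-order data, and since $\rank(\SJ(\hat\xx))=n-1$ together with~(\ref{chooseeq}) the only freedom in each $\vv_{a_k}$ block is the first coordinate (which is held fixed to $0$ for $k\geq3$ and to the normalization $1$ in direction $\aaa_2$ by the structure of the $\mathbf{e}_1$ columns in the $\mathbf{b}$-block). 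Propagating upward, the $\mathbf{b}$-columns $\ee_1, x_1\ee_1,\ldots$ evaluated at $\hat\xx$ together with $\SJF(\hat\xx)$ must by~(\ref{chooseeq}) pin down $\vv_b$ and the $x_1$-part of $\vv_x$ to zero, and then the remaining coordinates of $\vv_x$ vanish because $\rank(\SJ(\hat\xx))=n-1$ and, crucially, $\rank(\Delta_{\mux+1}(F),\SJ(\hat\xx))=n$ (the defining breadth-one termination condition, available since $\hat\xx$ has exact multiplicity $\mux$). Concretely I would use the vector $\vv$ introduced in the proof of Theorem~\ref{nonsingular} --- the same $\vv$ with $J_G{}_{:,1..(\mux-1)n+1}\cdot\vv=(0,\ldots,0,\Delta_{\mux+1}(F_0))^T$ --- to see that any kernel element would produce a nontrivial relation forcing $\rank(\Delta_{\mux+1}(F),\SJ(\hat\xx))\leq n-1$, contradicting exact multiplicity $\mux$.

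In short, the argument is the converse direction of Theorem~\ref{nonsingular}: there, nonsingularity of $J_G$ was shown to imply the breadth-one/multiplicity conditions~(\ref{criterionb1}); here I run the same block-elimination but use the \emph{hypothesis} that $\hat\xx$ already has exact multiplicity $\mux$ with exact corank one --- which by Theorem~\ref{withouttran} is equivalent to $\rank(\SJF(\hat\xx))=n-1$ and $\rank(\Delta_{\mux+1}(F),\SJF(\hat\xx))=n$ --- to conclude that no nontrivial kernel vector can exist. The two rank conditions feed, respectively, the invertibility of the $\SJ$-blocks along the subdiagonal and the invertibility of the top-left $J_{F_1}$-coupled block; together with~(\ref{chooseeq}) handling the $\mathbf{b}$-columns, nonsingularity follows.

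The main obstacle I anticipate is bookkeeping the elimination cleanly: $J_G$ in~(\ref{JG}) is block lower-triangular in the $\aaa$-blocks but the $\mathbf{b}$-columns and the first ($\xx$) block column are genuinely coupled through $J_{F_1},\ldots,J_{F_\mux}$ and the $\frac{x_1^{\nu}}{\nu!}\ee_1$ entries, so I cannot simply invoke block-triangularity. The delicate point is to show that after using the subdiagonal $\SJ$'s to kill the $\aaa$-blocks, what remains in the $(\xx,\mathbf{b})$ columns is exactly the matrix $(\SJF(\hat\xx),\ee_1,x_1\ee_1,\ldots)$-type object whose injectivity is guaranteed by~(\ref{chooseeq}) in the relevant coordinate together with the exact-multiplicity rank condition $\rank(\Delta_{\mux+1}(F),\SJF(\hat\xx))=n$; making this reduction rigorous (i.e. tracking precisely which linear combinations of block rows implement it) is the one place where care is needed, and the identity~(\ref{equivaij}) relating $\partial F_k/\partial a_{i,j}$ to $\partial F_{k-i+1}/\partial x_j$ is the tool that makes it go through.
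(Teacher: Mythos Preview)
Your argument for the nonsingularity of $J_G(\hat{\xx},\mathbf{0},\hat{\aaa})$ is essentially the paper's: split on whether the first kernel coordinate $v_1$ vanishes, use the block structure of~(\ref{JG}) together with~(\ref{chooseeq}) to dispose of the case $v_1=0$, and invoke the termination condition $\rank(\Delta_{\mux+1}(F),\SJF(\hat{\xx}))=n$ (via the same auxiliary vector from the proof of Theorem~\ref{nonsingular}) to rule out $v_1\neq0$.

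However, your proof of~(\ref{chooseeq}) has a genuine gap. You invoke only the \emph{column} half of Assumption~\ref{choosexifj} and then try to deduce that $\ee_1$ is not in the column span of $\SJF(\hat{\xx})$ from $\mux\geq2$. That implication does not hold: whether $\ee_1$ lies in the column span of $\SJF(\hat{\xx})$ is determined by which \emph{equation} index was selected for perturbation, not by the multiplicity. For instance a $2\times2$ Jacobian $\left(\begin{smallmatrix}1&2\\0&0\end{smallmatrix}\right)$ has corank one and satisfies the column hypothesis (its first column is half the second), yet $\ee_1=(1,0)^T$ lies in the span of $\SJF=(2,0)^T$, and the underlying root can have any multiplicity $\geq2$. (Your appeal to ``$\rank(\Delta_2(F),\SJF(\hat{\xx}))=n$'' is also not well-posed, since $\Delta_k$ is only defined for $k\geq3$.) What actually makes~(\ref{chooseeq}) work is the \emph{row} half of Assumption~\ref{choosexifj}, which you never use: the first row of $J_F(\hat{\xx})$, hence of $\SJF(\hat{\xx})$, is a linear combination of the remaining rows; since $\rank(\SJF(\hat{\xx}))=n-1$, the last $n-1$ rows of $\SJF(\hat{\xx})$ must already be linearly independent. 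Appending the column $\ee_1$ then gives an $n\times n$ matrix whose last $n-1$ rows have the form $(\text{invertible block},\,0)$ and whose first row ends in $1$, so the rank is $n$. Replace your multiplicity argument with this row-based one.
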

\begin{proof}
According to  Assumption  \ref{choosexifj}, the first row of
 $\SJF(\hat{\xx})$ can be written as a linear combination of its other rows. Since the rank of
 $\SJF(\hat{\xx})$ is $n-1$, its last $n-1$ rows must be linear
 independent.  Therefore, we have (\ref{chooseeq}).

 Assume $\vv$ is a nontrivial vector in the kernel of
$J_G(\hat{\xx},\mathbf{0},\hat{\aaa})$. If $v_1=0$, by checking the
columns of $J_G$ in (\ref{JG}), and using
(\ref{chooseeq}), %$\rank(\SJF(\hat{\xx}),\ee_1)=n$,
 we can show  that $\vv=0$. If $v_1\neq 0$, we can assume $v_1=1$.
Similar to the second part of proof of Theorem \ref{nonsingular}, we
derive that $\Delta_{\mu+1}(F)$ can be written as a linear
combination of the columns from $\SJF(\hat{\xx})$, which is a
contradiction. Hence, there exists no nontrivial vector in the
kernel of $J_G(\hat{\xx},\mathbf{0},\hat{\aaa})$. In other word,
$J_G(\hat{\xx},\mathbf{0},\hat{\aaa})$ is nonsingular.
\end{proof}

Now, we apply Theorem \ref{verification} on the deflated system. If the test succeeds,
we will derive verified and narrowed error bounds with the property that a slightly perturbed system
 is proved to have a breadth-one multiple root within the computed bounds.

\begin{theorem}\label{verifybreadth1}
Suppose Theorem \ref{verification} is applicable to
$G(\mathbf{x},\mathbf{b},\mathbf{a})$ in (\ref{deflation}) and
yields inclusions for $\hat{\xx}$, $\hat{\bb}$ and $\hat{\aaa}$
such that $G(\hat{\xx},\hat{\bb},\hat{\aaa})=0$. Then $\hat{\xx}$
is an isolated breath-one root of
$F_0(\mathbf{x}):=F_1(\mathbf{x},\hat{\bb})$ with the multiplicity
$\mux$.
\end{theorem}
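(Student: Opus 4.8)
The plan is to deduce the statement by chaining two results already in place: the interval verification theorem (Theorem~\ref{verification}) applied to the \emph{augmented} system $G$, and the algebraic characterization of Theorem~\ref{nonsingular}. The system $G(\xx,\bb,\aaa)$ is square --- it has $\mu n$ unknowns $(\xx,\bb,\aaa)$ and $\mu n$ equations, as recorded below~(\ref{deflation}) --- so Theorem~\ref{verification} applies to it verbatim, with its ``$F$'' taken to be $G$ and its ``$n$'' taken to be $\mu n$. The hypothesis that Theorem~\ref{verification} ``is applicable and yields inclusions for $\hat\xx,\hat\bb,\hat\aaa$'' means precisely that, for a suitable point $(\tilde\xx,\tilde\bb,\tilde\aaa)$, an interval box $X$ and an interval matrix $M$ enclosing the gradients of the components of $G$ on $X$, the Krawczyk-type contraction $-J_G(\tilde\xx,\tilde\bb,\tilde\aaa)\,G(\tilde\xx,\tilde\bb,\tilde\aaa) + (I - J_G(\tilde\xx,\tilde\bb,\tilde\aaa)M)X \subseteq \mathrm{int}(X)$ holds. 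Theorem~\ref{verification} then gives both a (unique) zero $(\hat\xx,\hat\bb,\hat\aaa)\in X$ of $G$ and, crucially, the nonsingularity of the Jacobian $J_G$ at that zero. Since the statement already supplies $G(\hat\xx,\hat\bb,\hat\aaa)=0$, the one new fact extracted at this step is $\det J_G(\hat\xx,\hat\bb,\hat\aaa)\neq 0$.

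Next I would feed this into Theorem~\ref{nonsingular}. The deflated system $G$ is constructed under Assumption~\ref{choosexifj}: the variables are renumbered and the equations reordered by~(\ref{exchangexifj}) so that the first row and the first column of the Jacobian are linear combinations of the remaining $n-1$ ones. Hence Theorem~\ref{nonsingular} applies to the common zero $(\hat\xx,\hat\bb,\hat\aaa)$, and from $G(\hat\xx,\hat\bb,\hat\aaa)=0$ together with the nonsingularity of $J_G(\hat\xx,\hat\bb,\hat\aaa)$ obtained above it yields that $\hat\xx$ is an isolated root of $F_0(\xx)=F_1(\xx,\hat\bb)$ with multiplicity exactly $\mux$ and that $\mathrm{corank}(J_{F_0}(\hat\xx))=1$. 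By the dimension-jump property recalled in Section~\ref{pre}, corank one is the same as breadth one, so this is exactly the assertion.

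The honest remark is that there is no genuinely hard step left in \emph{this} theorem: all the substance is hidden in Theorem~\ref{nonsingular}, whose proof exploits the block structure of $J_G$ in~(\ref{JG}) together with the rank criterion~(\ref{criterionb1}). The only points that need care when writing Theorem~\ref{verifybreadth1} out are bookkeeping ones: (i) making explicit that the verification hypotheses of Theorem~\ref{verification} are imposed on the augmented system $G$ rather than on $F$, and that the relevant dimension is $\mu n$; and (ii) confirming that Assumption~\ref{choosexifj} is part of the standing setup for $G$, so that Theorem~\ref{nonsingular} is indeed applicable to $(\hat\xx,\hat\bb,\hat\aaa)$. With these two observations the argument is the short citation chain: Theorem~\ref{verification} $\Rightarrow$ $J_G(\hat\xx,\hat\bb,\hat\aaa)$ nonsingular $\Rightarrow$ (Theorem~\ref{nonsingular}) $\hat\xx$ is an isolated breadth-one root of $F_0$ of multiplicity $\mux$.
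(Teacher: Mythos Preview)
Your proposal is correct and matches the paper's own proof, which is simply ``A direct result of Theorem~\ref{verification} and Theorem~\ref{nonsingular}.'' You have merely unpacked the two citations and the bookkeeping (that $G$ is square of size $\mu n$ and that Assumption~\ref{choosexifj} is in force), which is exactly the intended argument.
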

\begin{proof}
A direct result of Theorem  \ref{verification} and Theorem
\ref{nonsingular}.
\end{proof}

%The following example is cited from  \citep{RuGr09}, in which no
%inclusion of its root $(0,0)$ can be summarized by their method,
%since $(0,0)$ is a breath-one singular root with  the multiplicity
%$4$ not $2$.

\begin{example}\citep[Example 4.11]{RuGr09}
Consider a polynomial system
\[F=\{x_1^2x_2-x_1x_2^2,x_1-x_2^2.\}\]
The system $F$ has $(0,0)$ as a $4$-fold isolated zero.
\end{example}
The Jacobian matrix of $F$ at $(0,0)$ is
\[\JF(0,0)=\left[\begin{array}{cc}
  0 & 0  \\
  1 & 0
\end{array}
\right],\] so we choose $x_2$ as the perturbed variable and  add the
 univariate polynomial $b_0+b_1x_2+\frac{b_2}{2}x_2^2$
  to the first equation in $F$ to construct the
parameterized deflated system
\[G(\mathbf{x},\mathbf{b},\mathbf{a})=\left(\begin{array}{c}
            x_1^2x_2-x_1x_2^2-b_0-b_1x_2-\frac{b_2}{2}x_2^2 \\
            x_1-x_2^2 \\
            2a_1x_1x_2-a_1x_2^2+x_1^2-2x_1x_2-b_1-b_2x_2 \\
            a_1-2x_2 \\
            a_1^2x_2+2a_1x_1-2a_1x_2+2a_2x_1x_2-a_2x_2^2-x_1-\frac{b_2}{2} \\
            a_2-1 \\
            a_1^2+2a_1a_2x_2-a_1+2a_2x_1-2a_2x_2+2a_3x_1x_2-a_3x_2^2 \\
            a_3
          \end{array}
\right).\] Applying the INTLAB function \textsf{verifynlss}
\citep{RumpINT} to $G$ with the initial approximation
\[[0.002,0.003,-0.001,0.0015,-0.002,0.002,1.001,-0.01]\]
to obtain inclusions
  \begin{align*}
  &[\,\,\,\,-0.00000000000001,\,\,\,\,\,\,0.00000000000001]\\
  &[\,\,\,\,-0.00000000000001,\,\,\,\,\,\,0.00000000000001]\\
  &[\,\,\,\,-0.00000000000001,\,\,\,\,\,\,0.00000000000001]\\
  &[\,\,\,\,-0.00000000000001,\,\,\,\,\,\,0.00000000000001]\\
  &[\,\,\,\,-0.00000000000001,\,\,\,\,\,\,0.00000000000001]
%  &[\,\,\,\,-0.00000000000001,\,\,\,\,\,\,0.00000000000001]\\
%  &[\,\,\,\,\,\,\,\,\,1.00000000000000,\,\,\,\,\,\,1.00000000000000]\\
%  &[\,\,\,\,-0.00000000000001,\,\,\,\,\,\,0.00000000000001]
  \end{align*}
This proves that the perturbed system $F_0(\mathbf{x})$ ($|b_i| \leq 10^{-14}, i=0,1,2$) has a $4$-fold root $\hat{\xx}$ with $-10^{-14} \leq \hat{\xx} \leq 10^{-14}$.

\begin{example}\citep[Example 4.7]{RuGr09}\label{sensitive}
Consider a polynomial system
\[F=\{x_1^2-x_2^2,x_1-x_2^2.\}\]
The system $F$ has $(0,0)$ as a $2$-fold isolated zero.
\end{example}
For this example, as mentioned in \citep{RuGr09}, the iteration is
sensitive to the initial approximations. %since they apply the
%classic Newton method to the deflated system with the
%perturbation.
 Applying the INTLAB function \textsf{verifynlss2} to
$F$ with the starting point $[0.002,0.001]$, we will obtain
inclusions
  \begin{align*}
  &[\,\,\,\,-0.00000000000001,\,\,\,\,\,\,0.00000000000001]\\
  &[\,\,\,\,-0.00000000000001,\,\,\,\,\,\,0.00000000000001]\\
  &[\,\,\,\,-0.00000000000001,\,\,\,\,\,\,0.00000000000001]
  \end{align*}
However, for the initial approximation $[0.001,0.001]$, we obtain
  \begin{align*}
  &[\,\,\,\,\,\,\,\,\,0.49999999999999,\,\,\,\,\,\,0.50000000000001]\\
  &[\,\,\,\,\,\,\,\,\,0.70710678118654,\,\,\,\,\,\,0.70710678118655]\\
  &[\,\,\,\,-0.25000000000001,\,\,-0.24999999999999]
  \end{align*}
which finds the double root $(0.5,1/\sqrt{2})$ of $x_1^2-x_2^2+0.25=0$ and $x_1-x_2^2=0$.

For this reason, we prefer to use the symbolic-numeric method described
 in \citep{LZ:2011}  to refine initial approximations firstly, then use the method in Theorem \ref{verifybreadth1} to compute inclusions of multiple roots.
We show the routine \verb"MRRB1" below for refining a singular
solution to  high precision.  %Compared with the algorithm in
%\citep{LZ:2011},
 The input of \verb"MRRB1" is a sequence of
polynomial systems $F_1,F_2,\ldots,F_{\mux}$  defined in
(\ref{deflation}) (\ref{Fk}) with $\bb=\mathbf{0}$,
$F_{\mux+1}=P_{\mux+1}(F_1)$ and an approximate solution of $F_1=0$.
The algorithm in \citep{LZ:2011} has been improved in \verb"MRRB1"
by avoiding linear transformations and constructing differential
functionals repeatedly.
%The differential functions are also computed more efficiently.

\begin{algorithm}\label{MRRB1}MRRB1

\noindent \textbf{Input:} A sequence of systems
$F_1,\ldots,F_{\mux+1}$, a point $\hat{\xx}\in\mathbb{K}^n$.

\noindent \textbf{Output:} A refined point $\hat{\xx}$ and refined
parameters $\hat{\aaa}_2,\ldots,\hat{\aaa}_{\mux}$.

\begin{enumerate}

\item \label{s1} \verb"Regularized Newton Iteration": Solve the
least squares problem
\[\left(J_{F_1}^{\tr}(\hat{\xx})\cdot
J_{F_1}(\hat{\xx})+\sigma_nI_n\right)
\yy=-J_{F_1}^{\tr}(\hat{\xx})\cdot F_1(\hat{\xx}),\]
%
%\[\hat{\xx}:=\hat{\xx}+\emph{LeastSquares}\left(\left(J_{F_1}^{\tr}(\hat{\xx})\cdot
%J_{F_1}(\hat{\xx})+\sigma_nI_n\right)
%\xx=J_{F_1}^{\tr}(\hat{\xx})\cdot F_1(\hat{\xx})\right),\]
 where
$J_{F_1}^{\tr}(\hat{\xx})$ is the conjugate transpose of
$J_{F_1}(\hat{\xx})$, $\sigma_n$ is the smallest singular value of
$J_{F_1}(\hat{\xx})$ and $I_n$ is the $n\times n$ identity matrix.

Set $\hat{\xx}:=\hat{\xx}+ \hat{\yy}$.

\item \label{s2} For $2 \leq k\leq\mux$, solve the least squares
problem
\[F_k(\hat{\xx},\hat{\aaa}_2,\ldots,\hat{\aaa}_{k-1},\aaa_k)=0\]
 to obtain $\hat{\aaa}_k$.
%\[\hat{\aaa}_k:=\emph{LeastSquares}\left(F_k(\hat{\xx},\hat{\aaa}_2,\ldots,\hat{\aaa}_{k-1},\aaa_k)=0\right)\mbox{and }k:=k+1.\]

\item \label{s3} Solve the linear system
\[\left[F_{\mux+1}(\hat{\xx},\hat{\aaa}_2,\ldots,\hat{\aaa}_{\mux}),\frac{\partial F_1(\hat{\xx})}{\partial x_2},\ldots,\frac{\partial F_1(\hat{\xx})}{\partial x_n}\right]\vv=-F_{\mux}(\hat{\xx},\hat{\aaa}_2,\ldots,\hat{\aaa}_{\mux}),\]
where $\vv=(v_1,\ldots,v_n)^T$. Set $\delta:=v_1/\mux$.

\item Return $\hat{\aaa}_2,\ldots,\hat{\aaa}_{\mux}$ and
\[\hat{\xx}:=\hat{\xx}+\delta\left(\begin{array}{c}
                                                 1 \\
                                                 \hat{a}_{2,2} \\
                                                 \vdots \\
                                                 \hat{a}_{2,n}
                                               \end{array}
\right).\]

%\[\hat{\xx}:=\hat{\xx}+\delta\left[\begin{array}{c}
%                                                 1,
%                                                 \hat{a}_{2,2},
%                                                 \ldots,
%                                                 \hat{a}_{2,n}
%                                               \end{array}
%\right]^T.\]

\end{enumerate}

\end{algorithm}
Now we consider Example \ref{sensitive} again. For $[0.002,0.001]$, after running \verb"MRRB1" two times then applying the INTLAB function \textsf{verifynlss} to $G$, we obtain
\begin{align*}
  &[\,\,\,\,-0.00000000000001,\,\,\,\,\,\,0.00000000000001]\\
  &[\,\,\,\,-0.00000000000001,\,\,\,\,\,\,0.00000000000001]\\
  &[\,\,\,\,-0.00000000000001,\,\,\,\,\,\,0.00000000000001]
  \end{align*}
Similarly, for $[0.001,0.001]$, we obtain
\begin{align*}
  &[\,\,\,\,-0.00000000000001,\,\,\,\,\,\,0.00000000000001]\\
  &[\,\,\,\,-0.00000000000001,\,\,\,\,\,\,0.00000000000001]\\
  &[\,\,\,\,-0.00000000000001,\,\,\,\,\,\,0.00000000000001]
  \end{align*}

\begin{example}\label{ref2} \citep{LiZhi:2009}
Consider a   system $F=\{f_1, \ldots, f_s\}$ given by
\vspace{-0.5cm}
\begin{eqnarray*}
f_i &=& x_i^2+x_i-x_{i+1}, ~{\mbox{if}}~ i < s,\\
f_s &=& x_s^3
 \end{eqnarray*}
 with a breath-one singular zero $(0, 0, \ldots,0)$  of  the multiplicity $3$. %The polynomial system is sparse and the time for computing the differential conditions increases cubic in $s$.
\end{example}

We run \verb"MRRB1" three times for initial approximate roots near
the origin, whose errors are around $10^{-4}$, to obtain refined
$\hat{\xx}$ and $\hat{\aaa}$ with errors about $10^{-12}$. We
choose $x_s$ as the perturbed variable and add the univariate
polynomial $b_0+b_1 x_s$ to the last polynomial  $f_s$ to
construct the parameterized deflated system. In the following
table, $|X|$ and $|B|$ denotes the interval size of inclusions for
$\hat{\xx}$ and $\hat{\bb}$ computed by applying INTLAB function
\textsf{verifynlss} to the deflated system (\ref{deflation}) and
$(\hat{\xx},\mathbf{0},\hat{\aaa})$.
\begin{table}[ht]
\begin{center}
\begin{tabular}{|c|c|c|} \hline
s & $|X|$ & $|B|$ \\ \hline
10 & $10^{-14}$ & $10^{-14}$ \\ \hline
20 & $10^{-14}$ & $10^{-14}$ \\ \hline
50 & $10^{-14}$ & $10^{-14}$ \\ \hline
100 & $10^{-14}$ & $10^{-14}$ \\ \hline
200 & $10^{-12}$ & $10^{-12}$ \\ \hline
500 & $10^{-12}$ & $10^{-12}$ \\ \hline
1000 & $10^{-12}$ & $10^{-12}$ \\ \hline
\end{tabular}
%{\label{table1}Algorithm Performance}
\end{center}
\vspace{-0.4cm}
\end{table}

%\section{Numerical Results}
%\label{num}

%\section{Conclusion}
%\label{con}

%\section*{Acknowledgments}
%Lihong Zhi would like to thank  Gilles Villard and  Nathalie Revol
%  for hosing her
%visit to LIP-ENS de Lyon in 2010. The paper is stimulated by
%fruitful discussions during the visit.

%\bibliographystyle{elsart-harv}
\bibliographystyle{elsarticle-harv}
\bibliography{strings,wuxiaoli,linan,zhi}
\end{document}